\title{} \author{} \date{}
\newtheorem{thm}{Theorem}[section]
\newtheorem{lem}[thm]{Lemma}
\newtheorem{prop}[thm]{Proposition}
\theoremstyle{definition}
\newtheorem{defn}[thm]{Definition}
\theoremstyle{remark}
\newtheorem{rem}[thm]{Remark}
\numberwithin{equation}{section}
\begin{document}

\thispagestyle{empty}

\begin{center}
{\large \bf FINSLER STRUCTURES ON HOLOMORPHIC LIE ALGEBROIDS}
\vspace*{3mm}

{\bf Alexandru Ionescu\footnote{Faculty of Mathematics and Informatics, \emph{Transilvania} University of Bra\c sov,  Romania,\\ e-mail: \href{mailto:alexandru.codrin.ionescu@gmail.com}{alexandru.codrin.ionescu@gmail.com}}}
\end{center}

\begin{abstract}
Complex Finsler vector bundles have been studied mainly by T. Aikou, who defined complex Finsler structures on holomorphic vector bundles. In this paper, we consider the more general case of a holomorphic Lie algebroid and we introduce Finsler structures, partial and Chern-Finsler connections on it.

First, we recall some basic notions on holomorphic Lie algebroids. Then, using an idea from E. Martinez, we introduce the concept of complexified prolongation of such an algebroid. Also, we study nonlinear and linear connections on the tangent bundle $T_{\mathbb{C}}E$ and on the prolongation $\mathcal{T}_{\mathbb{C}}E$ and we investigate the relation between their coefficients. The analogue of the classical Chern-Finsler connection is defined and studied in the paper for the case of the holomorphic Lie algebroid $E$.
\\[2mm] {\it AMS Mathematics  Subject Classification $(2010)$:} 17B66, 53B35, 53B40.
\\[1mm] {\it Key words and phrases:} Holomorphic Lie algebroid, anchor map, Finsler, nonlinear connection, prolongation, Lagrangian structures.
\end{abstract}

\section*{Introduction}

Complex Finsler structures on holomorphic vector bundles have been introduced and studied mainly by T. Aikou (\cite{3,4,5}). In this paper, we introduce Finsler structures, partial and Chern-Finsler connections in the more general case of a holomorphic Lie algebroid. The setting for the geometrical constructions will be the tangent bundle of the algebroid and the prolongation of the algebroid, a concept introduced by E. Martinez (\cite{13,14}) and studied also by L. Popescu \cite{17,18} and E. Peyghan (\cite{16}).

We briefly recall here some general notions and set our notations for the geometry of holomorphic Lie algebroids. More ideas can be found in \cite{8,9}.

Let $M$ be a complex $n$-dimensional manifold and $E$ a holomorphic vector bundle of rank $m$ over $M$. Denote by $\pi:E\rightarrow M$ the holomorphic bundle projection, by $\Gamma(E)$ the module of holomorphic sections of $\pi$ and let $T_{\mathbb{C}}M = T'M\oplus T''M$ be the complexified tangent bundle of $M$, split into the holomorphic and antiholomorphic tangent bundles.

The holomorphic vector bundle $E$ over $M$ is called anchored if there exists a holomorphic vector bundle morphism $\rho: E\rightarrow T'M$, called anchor map.

The following definition is known from \cite{6,7,10,12,20}.

A \textit{holomorphic Lie algebroid} over $M$ is a triple $(E,[\cdot,\cdot]_E,\rho_E)$, where $E$ is a holomorphic vector bundle anchored over $M$, $[\cdot,\cdot]_E$ is a Lie bracket on $\Gamma(E)$ and $\rho_E:\Gamma(E)\rightarrow\Gamma(T'M)$ is the homomorphism of complex modules induced by the anchor map $\rho$ such that 
\begin{equation} \label{1}
[s_1,fs_2]_E = f[s_1,s_2]_E+\rho_E(s_1)(f)s_2
\end{equation}
for all $s_1,s_2\in\Gamma(E)$ and all $f\in Hol(M)$.

As a consequence of this definition, we have that $\rho_E([s_1,s_2]_E) = [\rho_E(s_1),\\ \rho_E(s_2)]_{T'M}$ (\cite{12}), which means that $\rho_E:(\Gamma(E),[\cdot,\cdot]_E)\rightarrow(\Gamma(T'M),[\cdot,\cdot])$ is a complex Lie algebra homomorphism.

Locally, if $\{z^{k}\}_{k=\overline{1,n}}$ is a complex coordinate system on $U\subset M$ and $\{e_{\alpha }\}_{\alpha =\overline{1,m}}$ is a local frame of holomorphic sections of $E$ on $U$, then $(z^k,u^\alpha)$ are local complex coordinates on $\pi^{-1}(U)\subset E$, where $u = u^\alpha e_\alpha(z)\in E$.

The action of the holomorphic anchor map $\rho_E$ can locally be described by
\begin{equation} \label{5}
\rho_E(e_\alpha) = \rho_\alpha^k\dfrac{\partial}{\partial z^k},
\end{equation}
while the Lie bracket $[\cdot,\cdot]_E$ is locally given by 
\begin{equation} \label{6}
[e_\alpha,e_\beta]_E = \mathcal{C}^{\:\gamma}_{\alpha\beta}e_\gamma.
\end{equation}
The holomorphic functions $\rho^k_\alpha = \rho^k_\alpha(z)$ and $\mathcal{C}^{\:\gamma}_{\alpha\beta} = \mathcal{C}^{\:\gamma}_{\alpha\beta}(z)$ on $M$ are called \emph{the holomorphic anchor coefficients} and \emph{the holomorphic structure functions} of the Lie algebroid E, respectively.

Since $E$ is a holomorphic vector bundle, the natural complex structure acts on its sections by $J_E(e_\alpha) = ie_\alpha$ and $J_E(\bar{e}_\alpha) = -i\bar{e}_\alpha$. Hence, the complexified bundle $E_{\mathbb{C}}$ of $E$ decomposes into $E_{\mathbb{C}} = E'\oplus E''$. The sections of $E_{\mathbb{C}}$ are given as usual by $\Gamma(E') = \{s-iJ_Es\:|\:s\in\Gamma(E)\}$ and $\Gamma(E'') = \{s+iJ_Es\:|\:s\in\Gamma(E)\}$, respectively. The local basis of sections of $E'$ is $\{e_\alpha\}_{\alpha=\overline{1,m}}$, while for $E''$, the basis is represented by their conjugates $\{\bar{e}_\alpha := e_{\bar{\alpha}}\}_{\alpha=\overline{1,m}}$. Since $\rho_E:\Gamma(E)\rightarrow\Gamma(T'M)$ is a homomorphism of complex modules, it extends naturally to the complexified bundle by $\rho'(e_\alpha) = \rho_E(e_\alpha)$ and $\rho''(e_{\bar{\alpha}}) = \rho_E(e_{\bar{\alpha}})$. Thus, we can write $\rho_E = \rho'\oplus \rho''$ on the complexified bundle, and since $E$ is holomorphic, the functions $\rho(z)$ are holomorphic, hence $\rho_\alpha^{\bar{k}} = \rho_{\bar{\alpha}}^k = 0$ and $\rho_{\bar{\alpha}}^{\bar{k}} = \overline{\rho_\alpha^k}$.

As a vector bundle, the holomorphic Lie algebroid $E$ has a natural structure of complex manifold. If we wish to study a Finsler structure on the manifold $E$ with coordinates in a local chart $(z^k,u^\alpha)$, is of interest to consider the action of such a structure on the sections of the complexified tangent bundle $T_{\mathbb{C}}E$.

Two approaches on the tangent bundle of a holomorphic Lie algebroid $E$ were described in \cite{9}. The first is the classical study of the tangent bundle of $E$, while the second is that of the prolongation of $E$. The latter idea is due to E. Martinez \cite{13,14} based on the work of P. Liberman  \cite{11} and it appeared from the need of a formalism for ordinary Lagrangian Mechanics where equations of motion could be obtained independent of structures on the dual of the algebroid, a problem raised by Weinstein (\cite{19,20}). For introducing geometrical objects such as nonlinear connections or sprays which could be studied in a similar manner to the tangent bundle of a complex manifold, this setting seems therefore more attractive for studying the Finsler structure. Further, we briefly introduce the notion of prolongation of a holomorphic Lie algebroid and complete the study with some results on its complexification.

\section{The prolongation of a holomorphic Lie algebroid}

\label{S1}

Let us recall from \cite{9} some ideas on the prolongation of the holomorphic Lie algebroid.

For the holomorphic Lie algebroid $E$ over a complex manifold $M$, its prolongation will be introduced using the tangent mapping $\pi'_\ast: T'E\rightarrow T'M$ and the holomorphic anchor map $\rho_E: E\rightarrow T'M$. Define the subset $\mathcal{T}'E$ of $E\times T'E$ by $\mathcal{T}'E = \{(e,v)\in
E\times T'E\ |\ \rho(e) = \pi'_\ast(v)\}$ and the mapping $\pi'_{\mathcal{T}}: \mathcal{T}'E\rightarrow E$, given by $\pi'_{\mathcal{T}}(e,v) = \pi_E(v)$, where $\pi'_E: T'E\rightarrow E$ is the tangent projection. Then $(\mathcal{T}'E,\pi'_{\mathcal{T}},E)$ is a holomorphic vector bundle over $E$, of rank $2m$. Moreover, it is easy to verify that the projection onto the second factor $\rho'_{\mathcal{T}}: \mathcal{T}'E\rightarrow T'E$, $\rho'_{\mathcal{T}}(e,v) = v$, is the
anchor of a new holomorphic Lie algebroid over the complex manifold $E$ (see \cite{13,14,17,18} for details in the real case).

The holomorphic Lie algebroid $E$ has a structure of holomorphic vector bundle with respect to the complex structure $J_E$. Let $E_{\mathbb{C}}$ be the complexified bundle of $E$ and $T_{\mathbb{C}}E = T'E\oplus T''E$, its complexified tangent bundle. A similar idea to that of Martinez (\cite{13,14}) leads to the definition of the complexified prolongation $\mathcal{T}_{\mathbb{C}}E$ of $E$ as follows. We extend $\mathbb{C}$-linearly the tangent mapping $\pi'_\ast: T'E\rightarrow T'M$ and the anchor $\rho_E: E\rightarrow T'M$ to obtain $\pi_{\ast,\mathbb{C}}: T_{\mathbb{C}}E\rightarrow T_{\mathbb{C}}M$ and $\rho_{E,\mathbb{C}}: E_{\mathbb{C}}\rightarrow T_{\mathbb{C}}M$, respectively. If $\pi_{E,\mathbb{C}}: T_{\mathbb{C}}E\rightarrow E_{\mathbb{C}}$ is the tangent projection extended to the complexified spaces, then we can define the subset $\mathcal{T}_{\mathbb{C}}E$ of $E_{\mathbb{C}}\times T_{\mathbb{C}}E$ by 
\begin{equation*}
\mathcal{T}_{\mathbb{C}}E = \{(e,v)\in E_{\mathbb{C}}\times T_{\mathbb{C}}E\ |\ \rho_{E,\mathbb{C}}(e) = \pi_{\ast,\mathbb{C}}(v)\}
\end{equation*}
and the mapping $\pi_{\mathcal{T},\mathbb{C}}: \mathcal{T}_{\mathbb{C}}E\rightarrow E_{\mathbb{C}}$ by $\pi_{\mathcal{T},\mathbb{C}}(e,v) = \pi_{E,\mathbb{C}}(v)$. Thus, we obtain a complex vector bundle $(\mathcal{T}_{\mathbb{C}}E,\pi_{\mathcal{T},\mathbb{C}},E_{\mathbb{C}})$ over $E_{\mathbb{C}}$. Also, the projection onto the second factor, 
\begin{equation*}
\rho_{\mathcal{T},\mathbb{C}}: \mathcal{T}_{\mathbb{C}}E \rightarrow T_{\mathbb{C}}E, \quad \rho_{\mathcal{T},\mathbb{C}}(e,v) = v,
\end{equation*}
is the anchor of a complex Lie algebroid over $E_{\mathbb{C}}$, called \emph{the complexified prolongation} of $E$. Indeed, $\mathcal{T}'E = E'\times T'E$ is a holomorphic product bundle and, since $\rho_{E,\mathbb{C}} = \rho'+\rho''$ and $\pi_{\ast ,\mathbb{C}} = \pi'_\ast+\pi''_\ast$ are holomorphic mappings with $\rho_{E,\mathbb{C}}(e) = \pi_{\ast,\mathbb{C}}(v)$, then $\rho'(\bar{e})=\pi'_\ast(\bar{v}) = 0$. We conclude that the complexified prolongation coincides with the complexification of the prolongation $\mathcal{T}'E$ (as a complex manifold), that is $\mathcal{T}_{\mathbb{C}}E = \mathcal{T}'E\oplus \mathcal{T}''E$, where $\mathcal{T}''E = \overline{\mathcal{T}'E} = E''\times T''E$, with the required restrictions $\rho'_E(e) = \pi'_\ast(v)$ and its conjugate.

The vertical subbundle of the complexified prolongation is defined using the projection onto the first factor $\tau_1: \mathcal{T}'E \rightarrow E$, $\tau_1(e,v) = e$, by 
\begin{equation*}
V\mathcal{T}'E = \ker\tau_1 = \{(e,v)\in\mathcal{T}'E\ |\ \tau_1(e,v) = 0\}.
\end{equation*}
From the construction above, it follows that any element of $V\mathcal{T}'E$ has the form $(0,v)\in E\times \mathcal{T}'E$, with $\pi'_\ast(v) = 0$. Then, vertical elements $(0,v)\in V\mathcal{T}'E$ have the property $v\in\ker\pi'_\ast$. By conjugation, we obtain $V\mathcal{T}''E$ and the complexified vertical subbundle of the prolongation $\mathcal{T}_{\mathbb{C}}E$ is $V\mathcal{T}_{\mathbb{C}}E = V\mathcal{T}'E\oplus V\mathcal{T}''E$.

The local coordinates on $\mathcal{T}'E$ are $(z^k,u^\alpha,v^\alpha,w^\alpha)$, obtained as follows. For an element $e = v^\alpha e_\alpha(z)\in E$ and a vector $v$ tangent at $E$ in $u = u^\alpha e_\alpha(z)\in E$, that is, $v\in T'_uE$, the identity $\rho'(e) = \pi'_*(v)$ yields the vector $v$
in the form 
\begin{equation*}
v = \rho^k_\alpha v^\alpha\dfrac{\partial}{\partial z^k} + w^\alpha\dfrac{\partial}{\partial u^\alpha}.
\end{equation*}

The local basis of holomorphic sections in $\Gamma(\mathcal{T}'E)$ is $\{\mathcal{Z}_\alpha,\mathcal{V}_\alpha\}$, defined by 
\begin{equation*}
\mathcal{Z}_\alpha(u) = \bigg(e_\alpha(\pi(u)),\rho^k_\alpha\dfrac{\partial}{\partial z^k}\bigg|_{u}\bigg), \qquad \mathcal{V}_\alpha(u) = \bigg(0,\dfrac{\partial}{\partial u^\alpha}\bigg|_{u}\bigg),
\end{equation*}
where $\bigg\{\dfrac{\partial}{\partial z^k},\dfrac{\partial}{\partial u^\alpha}\bigg\}$ is the natural frame on $T'E$. Therefore, a local basis of sections in $\Gamma(\mathcal{T}_{\mathbb{C}}E)$ is $\{\mathcal{Z}_\alpha,\mathcal{V}_\alpha,\mathcal{Z}_{\bar{\alpha}},\mathcal{V}_{\bar{\alpha}}\}$, where $\mathcal{Z}_{\bar{\alpha}},\mathcal{V}_{\bar{\alpha}}$ are obtained by conjugation, i.e.,
\begin{equation*}
\mathcal{Z}_{\bar{\alpha}}(u) = \bigg(e_{\bar{\alpha}}(\pi(u)),\rho_{\bar{\alpha}}^{\bar{k}}\dfrac{\partial}{\partial\bar{z}^{k}}\bigg|_{u}\bigg), \quad \mathcal{V}_{\bar{\alpha}}(u) = \bigg(0,\dfrac{\partial}{\partial\bar{u}^{\alpha}}\bigg|_{u}\bigg).
\end{equation*}

For a change of local charts on $E$ given by
\begin{equation*}
\widetilde{z}^k = \widetilde{z}^k(z),\qquad \widetilde{u}^\alpha =
M^\alpha_\beta(z)u^\beta,
\end{equation*}
the basis of sections on $E$, $\{e_\alpha\}$, changes by
\begin{equation}\label{sch.c.e}
\widetilde{e}_\alpha = W^\beta_\alpha e_\beta,
\end{equation}
the local coefficients of the anchor map, $\rho^k_\alpha$, change as
\begin{equation} \label{sch.c.ro}
\widetilde{\rho}^k_\alpha = W^\beta_\alpha \rho^h_\beta \dfrac{\partial \widetilde{z}^k}{\partial z^h},
\end{equation}
while the natural frame of fields $\bigg\{\dfrac{\partial }{
\partial z^{k}},\dfrac{\partial }{\partial u^{\alpha }}\bigg\}$ from $T'E$ changes by the rules
\begin{align}
\dfrac{\partial }{\partial z^{h}}& =\dfrac{\partial \widetilde{z}^{k}}{%
\partial z^{h}}\dfrac{\partial }{\partial \widetilde{z}^{k}}+\dfrac{\partial
M_{\beta }^{\alpha }}{\partial z^{h}}u^{\beta }\dfrac{\partial }{\partial 
\widetilde{u}^{\alpha }},  \label{sch.c.T'E} \\
\dfrac{\partial }{\partial u^{\beta }}& =M_{\beta }^{\alpha }\dfrac{\partial 
}{\partial \widetilde{u}^{\alpha }},  \notag
\end{align}
see \cite{9} for more details. Obviously, since $E$ is a complex manifold, all of the above rules can also be conjugated.

The coordinates on $\mathcal{T}'E$ change by the rules
\begin{align*}
\widetilde{z}^k &= \widetilde{z}^k(z),\\
\widetilde{u}^\alpha &= M^\alpha_\beta u^\beta,\\
\widetilde{v}^\alpha &= M^\alpha_\beta v^\beta,\\
\widetilde{w}^\alpha &= M^\alpha_\beta w^\beta + \rho^k_\beta v^\beta\dfrac{\partial M^\alpha_\gamma}{\partial z^k}u^\gamma.
\end{align*}

Using these, we obtain the rules of change for the local basis of sections $\{\mathcal{Z}_{\alpha },\mathcal{V}_{\alpha },\mathcal{Z}_{\bar{\alpha}},\mathcal{V}_{\bar{\alpha}}\}$ from $\Gamma (\mathcal{T}_{\mathbb{C}}E)$:
\begin{align} \label{sch.sect}
\widetilde{\mathcal{Z}}_\beta &= W^\alpha_\beta\left(\mathcal{Z}_\alpha - \rho^h_\alpha\dfrac{\partial M^\gamma_\varepsilon}{\partial z^h}W^\tau_\gamma u^\varepsilon \mathcal{V}_\tau\right), \\
\widetilde{\mathcal{V}}_\beta &= W^\alpha_\beta\mathcal{V}_\alpha\notag
\end{align}
together with their conjugates.

By using a complete lift (which can be defined naturally) to the prolongation $\mathcal{T}^{\prime }E$, the Liouville vector and an almost tangent structure can be defined: 
\begin{equation} \label{s1}
\mathcal{L}=u^{\alpha }\mathcal{V}_{\alpha} 
\end{equation}
and
\begin{equation} \label{act.str.tg.}
T(\mathcal{Z}^{\alpha })=\mathcal{V}^{\alpha },\qquad T(\mathcal{V}^{\alpha
})=0. 
\end{equation}
A section $\mathcal{S}$ of the holomorphic Lie algebroid $\mathcal{T}^{\prime }E$ is called \textit{complex semispray} on $E$ if
\begin{equation*}
T(\mathcal{S})=\mathcal{L}.
\end{equation*}

The local expression of a semispray on $\mathcal{T}^{\prime }E$ is 
\begin{equation*}
\mathcal{S}=u^{\alpha }\mathcal{Z}_{\alpha }-2G^{\alpha }(z,u)\mathcal{V}_{\alpha }.
\end{equation*}

The local coefficients $G^{\alpha }$ of a special complex semispray can be derived from a Lagrange (Finsler) function on $E$, as proved in Theorem 2.1 from \cite{9}.

As further notations, we shall use the well-known abbreviations 
\begin{equation*}
\dfrac{\partial }{\partial z^{k}}:=\partial_{k},\ \dfrac{\partial }{\partial u^{\alpha }}:=\dot{\partial}_{\alpha },\ \dfrac{\partial }{\partial \bar{z}^{k}}:=\partial_{\bar{k}},\ \dfrac{\partial }{\partial \bar{u}^{\alpha }}:=\dot{\partial}_{\bar{\alpha}}.
\end{equation*}

The action of the anchor map $\rho_{\mathcal{T}}$ on $\mathcal{T}E$ is locally described by 
\begin{align}
\label{act.ro.prel}
&\rho_{\mathcal{T}}(\mathcal{Z}_\alpha) = \rho^k_\alpha\partial_k =: \partial_\alpha,\quad \rho_{\mathcal{T}}(\mathcal{V}_\alpha) = \dot{\partial}_\alpha,\\
\notag
&\rho_{\mathcal{T}}(\mathcal{Z}_{\bar{\alpha}}) = \rho^{\bar{z}}_{\bar{\alpha}}\partial_{\bar{k}}=:\partial_{\bar{\alpha}},\quad \rho_{\mathcal{T}}(\mathcal{V}_{\bar{\alpha}}) = \dot{\partial}_{\bar{\alpha}}.
\end{align}

\begin{prop}
The Lie brackets of the basis $\{\mathcal{Z}_\alpha,\mathcal{V}_\alpha,\mathcal{Z}_{\bar{\alpha}},\mathcal{V}_{\bar{\alpha}}\}$ are 
\begin{align*}
&[\mathcal{Z}_\alpha,\mathcal{Z}_\beta]_\mathcal{T} = \mathcal{C}^{\:\gamma}_{\alpha\beta}\mathcal{Z}_\gamma,
&[\mathcal{Z}_\alpha,\mathcal{V}_\beta]_\mathcal{T} = 0,\qquad
&[\mathcal{V}_\alpha,\mathcal{V}_\beta]_\mathcal{T} = 0, \\
&[\mathcal{Z}_\alpha,\mathcal{Z}_{\bar{\beta}}]_\mathcal{T} = 0, 
&[\mathcal{Z}_\alpha,\mathcal{V}_{\bar{\beta}}]_\mathcal{T} = 0,\qquad
&[\mathcal{V}_\alpha,\mathcal{V} _{\bar{\beta}}]_\mathcal{T} = 0
\end{align*}
and the conjugates, for instance $[\mathcal{Z}_{\bar{\alpha}},\mathcal{Z}_{\bar{\beta}}]_\mathcal{T} = \overline{[\mathcal{Z_\alpha},\mathcal{Z}_\beta]_{\mathcal{T}}} = \mathcal{C}^{\:\bar{\gamma}}_{\bar{\alpha}\bar{\beta}}\mathcal{Z}_{\bar{\gamma}}$, etc.
\end{prop}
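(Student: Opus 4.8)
The plan is to reduce each bracket $[\cdot,\cdot]_{\mathcal{T}}$ of two basis sections to a pair of ordinary brackets — one in $\Gamma(E_{\mathbb{C}})$ and one of vector fields on $E$ — and then to read the answer off from \eqref{6}, the homomorphism property of $\rho_E$, and the fact that the anchor coefficients $\rho^k_\alpha$ are holomorphic in $z$ and independent of the fibre coordinates $u$.

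First I would record the form of the prolongation bracket that applies here. Since $\rho_{\mathcal{T},\mathbb{C}}(e,v)=v$ is the anchor of $\mathcal{T}_{\mathbb{C}}E$ and an anchor is a homomorphism of Lie algebras, the $T_{\mathbb{C}}E$-component of $[(e_1,v_1),(e_2,v_2)]_{\mathcal{T}}$ must be the vector-field bracket $[v_1,v_2]$. When, in addition, the first components are pullbacks by $\pi$ of sections of $E_{\mathbb{C}}$ — exactly the case for the four generators, whose $E_{\mathbb{C}}$-parts are $e_\alpha\circ\pi$, $0$, $e_{\bar\alpha}\circ\pi$, $0$ — the Leibniz rule and the construction of $\mathcal{T}'E$ (following E. Martinez; see \cite{13,14,17,18} for the real case) give
\[
[(s_1\circ\pi,\,v_1),(s_2\circ\pi,\,v_2)]_{\mathcal{T}}=\bigl([s_1,s_2]_E\circ\pi,\ [v_1,v_2]\bigr),
\]
the further Leibniz terms present for non-projectable sections being absent because the generators have constant coefficients in their $E_{\mathbb{C}}$-parts. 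Recall also from \eqref{act.ro.prel} that the vector-field parts of $\mathcal{Z}_\alpha,\mathcal{Z}_{\bar\alpha},\mathcal{V}_\alpha,\mathcal{V}_{\bar\alpha}$ are $\rho^k_\alpha\partial_k$, $\rho^{\bar{k}}_{\bar\alpha}\partial_{\bar{k}}$, $\dot{\partial}_\alpha$, $\dot{\partial}_{\bar\alpha}$.

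Then I would run the cases. For $[\mathcal{Z}_\alpha,\mathcal{Z}_\beta]_{\mathcal{T}}$ the first component is $[e_\alpha,e_\beta]_E=\mathcal{C}^{\:\gamma}_{\alpha\beta}e_\gamma$ by \eqref{6}, and the second is
\[
[\rho^k_\alpha\partial_k,\rho^h_\beta\partial_h]=\bigl(\rho^k_\alpha\,\partial_k\rho^h_\beta-\rho^k_\beta\,\partial_k\rho^h_\alpha\bigr)\partial_h .
\]
Writing out $\rho_E([e_\alpha,e_\beta]_E)=[\rho_E(e_\alpha),\rho_E(e_\beta)]_{T'M}$ (recalled just after \eqref{1}) by means of \eqref{5} and \eqref{6} gives $\mathcal{C}^{\:\gamma}_{\alpha\beta}\rho^h_\gamma=\rho^k_\alpha\partial_k\rho^h_\beta-\rho^k_\beta\partial_k\rho^h_\alpha$, so the second component equals $\mathcal{C}^{\:\gamma}_{\alpha\beta}\rho^h_\gamma\partial_h$; since $\mathcal{C}^{\:\gamma}_{\alpha\beta}=\mathcal{C}^{\:\gamma}_{\alpha\beta}(z)$, the resulting pair is exactly $\mathcal{C}^{\:\gamma}_{\alpha\beta}\mathcal{Z}_\gamma$. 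For $[\mathcal{Z}_\alpha,\mathcal{V}_\beta]_{\mathcal{T}}$ and $[\mathcal{V}_\alpha,\mathcal{V}_\beta]_{\mathcal{T}}$ the first components vanish (a bracket with the zero section) while the second are $[\rho^k_\alpha\partial_k,\dot{\partial}_\beta]=-(\dot{\partial}_\beta\rho^k_\alpha)\partial_k=0$ and $[\dot{\partial}_\alpha,\dot{\partial}_\beta]=0$, since $\rho^k_\alpha$ does not depend on $u$. For the mixed brackets $[\mathcal{Z}_\alpha,\mathcal{Z}_{\bar\beta}]_{\mathcal{T}}$, $[\mathcal{Z}_\alpha,\mathcal{V}_{\bar\beta}]_{\mathcal{T}}$, $[\mathcal{V}_\alpha,\mathcal{V}_{\bar\beta}]_{\mathcal{T}}$, the first components are the mixed brackets in $E_{\mathbb{C}}=E'\oplus E''$, which vanish because the complexified bracket of $E$ has no structure functions mixing $E'$ and $E''$ (the holomorphic counterpart of $[\partial_k,\partial_{\bar h}]=0$; cf. \cite{9}), while the second components vanish because $\rho^k_\alpha$ is holomorphic while $\rho^{\bar{k}}_{\bar\beta}=\overline{\rho^k_\beta}$ is antiholomorphic, neither depending on $u$ or $\bar u$. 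Finally, the conjugate relations, e.g. $[\mathcal{Z}_{\bar\alpha},\mathcal{Z}_{\bar\beta}]_{\mathcal{T}}=\mathcal{C}^{\:\bar{\gamma}}_{\bar{\alpha}\bar{\beta}}\mathcal{Z}_{\bar\gamma}$, follow by applying complex conjugation, $\mathcal{T}_{\mathbb{C}}E=\mathcal{T}'E\oplus\overline{\mathcal{T}'E}$ being built from conjugation-symmetric data.

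The step I expect to require the most care is justifying the reduction formula of the second paragraph: for a section of $\mathcal{T}_{\mathbb{C}}E$ whose $E_{\mathbb{C}}$-component is not a pullback the bracket acquires correction terms, so one must check — as holds here — that each of $\mathcal{Z}_\alpha$, $\mathcal{V}_\alpha$ and their conjugates genuinely has a projectable $E_{\mathbb{C}}$-component. After that the only substantive ingredient is the Lie-algebroid compatibility between anchor and bracket, used for the bracket of two $\mathcal{Z}$'s; every other entry is a direct consequence of the holomorphicity and fibrewise constancy of $\rho^k_\alpha$.
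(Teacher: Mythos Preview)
Your argument is correct. The paper itself does not supply a proof of this proposition; it is stated as a direct consequence of the construction of $\mathcal{T}_{\mathbb{C}}E$ and is then used freely in the proof of Proposition~\ref{brackets}. Your write-up therefore goes beyond what the paper does by actually spelling out why each bracket has the stated value.

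The route you take---reducing $[(s_1\circ\pi,v_1),(s_2\circ\pi,v_2)]_{\mathcal{T}}$ to the pair $([s_1,s_2]_E\circ\pi,[v_1,v_2])$ for projectable sections, and then invoking \eqref{6}, the anchor compatibility $\rho_E([e_\alpha,e_\beta]_E)=[\rho_E(e_\alpha),\rho_E(e_\beta)]$, and holomorphicity of $\rho^k_\alpha$---is the standard verification in the Martinez formalism and matches how the paper implicitly uses these brackets later (see the expansion in the proof of Proposition~\ref{brackets}, where $[\mathcal{Z}_\alpha,\mathcal{Z}_\beta]=\mathcal{C}^{\:\gamma}_{\alpha\beta}\mathcal{Z}_\gamma$, $[\mathcal{Z}_\alpha,\mathcal{V}_\delta]=0$, $[\mathcal{V}_\gamma,\mathcal{V}_\delta]=0$ are taken for granted). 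Your caveat about the reduction formula is well placed: the componentwise bracket is only valid because each generator has projectable $E_{\mathbb{C}}$-part, which you note. The vanishing of the mixed bracket $[e_\alpha,e_{\bar\beta}]_{E_{\mathbb{C}}}$ is indeed the algebroid analogue of $[\partial_k,\partial_{\bar h}]=0$ and is part of the holomorphic setup recalled from \cite{9}; your justification via holomorphicity of the structure is adequate.
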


\subsection{Nonlinear connections on $\mathcal{T}_{\mathbb{C}}E$}

In \cite{9}, we have considered an adapted frame on $\mathcal{T}^{\prime}E$ given by a complex nonlinear connection. Our aim in the following is to define a complex nonlinear connection on the holomorphic prolongation $\mathcal{T}^{\prime }E$ which is induced by a nonlinear connection on $E$.

A \textit{complex nonlinear connection} on $\mathcal{T}^{\prime }E$ is given by a complex vector subbundle $H\mathcal{T}^{\prime }E$ of $\mathcal{T}^{\prime }E$ such that $\mathcal{T}^{\prime }E=H\mathcal{T}^{\prime }E\oplus V\mathcal{T}^{\prime }E$. If $l^{h}$ is the horizontal lift from $\mathcal{T}'E$ to $H\mathcal{T}'E$, then similar considerations as in the real case (\cite{17}) lead to the following local expression of $l^{h}$: 
\begin{equation*}
l^{h}(\mathcal{Z}_{\alpha })=\mathcal{Z}_{\alpha }-N_{\alpha }^{\beta }\mathcal{V}_{\beta },\qquad l^{h}(\mathcal{V}_{\alpha })=0,
\end{equation*}
where $N_{\alpha }^{\beta }=N_{\alpha }^{\beta }(z,u)$ are functions defined on $E$, called the coefficients of the complex nonlinear connection on $\mathcal{T}^{\prime }E$.

Let us consider that on $T'E$ is given a nonlinear connection with coefficients $N_k^\beta$ such that $\{\delta_k,\dot{\partial}_\alpha\}$ is the adapted frame of fields on $T'E$, where
\begin{equation*}
\delta_k = \partial_k - N_k^\beta\dot{\partial}_\beta
\end{equation*}
(see also \cite{8,9}). Denote by $\delta_\alpha = \rho_\alpha^k\delta_k$ and let
\begin{equation} \label{rel.coef.N}
N_\alpha^\beta = \rho_\alpha^k N_k^\beta.
\end{equation}
Then, these are the coefficients of an induced nonlinear connection on the prolongation $\mathcal{T}'E$, as we proved in \cite{9}.

Denote by 
\begin{equation} \label{7.1}
\mathcal{X}_\alpha=\mathcal{Z}_{\alpha }-N_{\alpha }^{\beta }\mathcal{V}_{\beta}
\end{equation}
in order to obtain a local frame $\{\mathcal{X}_{\alpha },\mathcal{V}_{\alpha }\}$ on $\mathcal{T}^{\prime }E$, called the \emph{adapted frame} with respect to the induced complex nonlinear connection on $\mathcal{T}^{\prime }E$. Then 
\begin{equation*}
\rho _{\mathcal{T}}(\mathcal{Z}_{\alpha })=\partial_\alpha,\qquad \rho _{\mathcal{T}}(\mathcal{V}_{\alpha })=\dot{\partial}_\alpha, 
\end{equation*}
hence
\begin{equation*}
\rho _{\mathcal{T}}(\mathcal{X}_{\alpha })=\delta_\alpha.
\end{equation*}

In \cite{9}, locally imposing that the adapted frame must change by the rules 
\begin{equation}
\delta _{\alpha }=M_{\alpha }^{\beta }\widetilde{\delta }_{\beta }
\label{7.4}
\end{equation}
for local changes $\widetilde{z}^{k}=\widetilde{z}^{k}(z),\ \widetilde{u}^{\alpha }=M_{\beta }^{\alpha }(z)u^{\beta }$ on $E$, we have obtained that the coefficients of the nonlinear connection change by the rule:
\begin{equation}
M_{\alpha }^{\beta }\widetilde{N}_{\beta }^{\gamma }=M_{\beta }^{\gamma}N_{\alpha }^{\beta }-\rho _{\alpha }^{k}\dfrac{\partial M_{\beta }^{\gamma }}{\partial z^{k}}u^{\beta }.  \label{7.5}
\end{equation}

From \eqref{sch.c.e}, \eqref{sch.c.T'E} and \eqref{7.4} we get
\begin{align*}
\widetilde{\mathcal{X}}_\alpha &= \big( \widetilde{e}_\alpha,\widetilde{\delta}_\alpha \big) = \big( W^\beta_\alpha e_\beta, W^\beta_\alpha \delta_\beta \big),\\
\widetilde{\mathcal{V}}_\alpha &= \big( 0,\widetilde{\dot{\partial}}_\alpha \big) = \big( 0, W^\beta_\alpha \dot{\partial}_\beta \big),
\end{align*}
such that the rules of change for the adapted frame $\{\mathcal{X}_{\alpha },\mathcal{V}_{\alpha }\}$ are
\begin{align}
\label{sch.rep.prel}
\widetilde{\mathcal{X}}_\alpha &= W^\beta_\alpha\mathcal{X}_\beta,\\
\notag
\widetilde{\mathcal{V}}_\alpha &= W^\beta_\alpha\mathcal{V}_\beta.
\end{align}

A significant result in \cite{9} is Theorem 2.2, in which a complex nonlinear connection is determined from a semispray and hence, together with Theorem 2.1, a special complex nonlinear connection on the prolongation $\mathcal{T}^{\prime }E$ can be obtained if on $E$ is given a Lagrange (Finsler) function.

Obviously, on the complexified prolongation bundle, a complex nonlinear connection determines the splitting of $\mathcal{T}_{\mathbb{C}}E$ as 
\begin{equation}
\label{split}
\mathcal{T}_{\mathbb{C}}E = H\mathcal{T}_{\mathbb{C}}E \oplus V\mathcal{T}_{\mathbb{C}}E \oplus \overline{H\mathcal{T}_{\mathbb{C}}E} \oplus \overline{V\mathcal{T}_{\mathbb{C}}E}  
\end{equation}
such that an adapted frame $\{\mathcal{X}_{\alpha },\mathcal{V}_{\alpha },\mathcal{X}_{\bar{\alpha}},\mathcal{V}_{\bar{\alpha}}\}$ is obtained on $\mathcal{T}_{\mathbb{C}}E$ with respect to the complex nonlinear connection.

\begin{prop}
\label{brackets}
The Lie brackets of the adapted frame $\{\mathcal{X}_\alpha,\mathcal{V}_\alpha,\mathcal{X}_{\bar{\alpha}},\mathcal{V}_{\bar{\alpha}}\}$ are 
\begin{align*}
[\mathcal{X}_\alpha,\mathcal{X}_\beta]_{\mathcal{T}} &=
\mathcal{C}^{\:\gamma}_{\alpha\beta}\mathcal{X}_\gamma + \mathcal{R}^{\:\gamma}_{\alpha\beta}\mathcal{V}_\gamma, \\
[\mathcal{X}_\alpha,\mathcal{X}_{\bar{\beta}}]_{\mathcal{T}} &=  (\delta_{\bar{\beta}}N^\gamma_\alpha)\mathcal{V}_\gamma - (\delta_\alpha N^{\bar{\gamma}}_{\bar{\beta}})\mathcal{V}_{\bar{\gamma}}, \\
[\mathcal{X}_\alpha,\mathcal{V}_\beta]_{\mathcal{T}} &= (\dot{\partial}_\beta N^\gamma_\alpha)\mathcal{V}_\gamma, \\
[\mathcal{X}_\alpha,\mathcal{V}_{\bar{\beta}}]_{\mathcal{T}} &= (\dot{\partial}_{\bar{\beta}}N^\gamma_\alpha)\mathcal{V}_\gamma, \\
[\mathcal{V}_\alpha,\mathcal{V}_\beta]_{\mathcal{T}} &= 0, \\
[\mathcal{V}_\alpha,\mathcal{V}_{\bar{\beta}}]_{\mathcal{T}} &= 0,
\end{align*}
where 
\begin{equation*}
\mathcal{R}^{\:\gamma}_{\alpha\beta} = \mathcal{C}^{\:\varepsilon}_{\alpha\beta}N_\varepsilon^\gamma - \delta_\alpha N^\gamma_\beta + \delta_\beta N^\gamma_\alpha.
\end{equation*}
\end{prop}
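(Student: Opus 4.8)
The plan is to reduce every bracket to the brackets of the natural frame $\{\mathcal{Z}_\alpha,\mathcal{V}_\alpha,\mathcal{Z}_{\bar\alpha},\mathcal{V}_{\bar\alpha}\}$ computed in the preceding proposition, using $\mathbb{C}$-bilinearity of the Lie algebroid bracket on $\mathcal{T}_{\mathbb{C}}E$ together with the Leibniz rule $[X,fY]_{\mathcal{T}}=f[X,Y]_{\mathcal{T}}+\rho_{\mathcal{T}}(X)(f)\,Y$ (and its antisymmetric version $[fX,Y]_{\mathcal{T}}=f[X,Y]_{\mathcal{T}}-\rho_{\mathcal{T}}(Y)(f)\,X$), where $\rho_{\mathcal{T}}$ acts as in \eqref{act.ro.prel}. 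Writing $\mathcal{X}_\alpha=\mathcal{Z}_\alpha-N_\alpha^\beta\mathcal{V}_\beta$ as in \eqref{7.1}, and its conjugate, each bracket among the $\mathcal{X}_\bullet,\mathcal{V}_\bullet$ expands into a small number of brackets of the basic sections with the functional coefficients $N_\bullet^\bullet$, each of which is then evaluated by the Leibniz rule.

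For the first identity I would expand
\[
[\mathcal{X}_\alpha,\mathcal{X}_\beta]_{\mathcal{T}} = [\mathcal{Z}_\alpha,\mathcal{Z}_\beta]_{\mathcal{T}} - [\mathcal{Z}_\alpha,N_\beta^\gamma\mathcal{V}_\gamma]_{\mathcal{T}} - [N_\alpha^\gamma\mathcal{V}_\gamma,\mathcal{Z}_\beta]_{\mathcal{T}} + [N_\alpha^\gamma\mathcal{V}_\gamma,N_\beta^\delta\mathcal{V}_\delta]_{\mathcal{T}}.
\]
Since $[\mathcal{Z}_\alpha,\mathcal{V}_\beta]_{\mathcal{T}}=[\mathcal{V}_\alpha,\mathcal{V}_\beta]_{\mathcal{T}}=0$ and $[\mathcal{Z}_\alpha,\mathcal{Z}_\beta]_{\mathcal{T}}=\mathcal{C}^{\,\gamma}_{\alpha\beta}\mathcal{Z}_\gamma$, the last three terms collapse to anchor derivatives, giving $-(\partial_\alpha N_\beta^\gamma)\mathcal{V}_\gamma+(\partial_\beta N_\alpha^\gamma)\mathcal{V}_\gamma+N_\alpha^\varepsilon(\dot\partial_\varepsilon N_\beta^\gamma)\mathcal{V}_\gamma-N_\beta^\varepsilon(\dot\partial_\varepsilon N_\alpha^\gamma)\mathcal{V}_\gamma$, where I used $\rho_{\mathcal{T}}(\mathcal{Z}_\alpha)=\partial_\alpha$ and $\rho_{\mathcal{T}}(\mathcal{V}_\alpha)=\dot\partial_\alpha$. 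Rewriting the first term in the adapted frame via $\mathcal{C}^{\,\gamma}_{\alpha\beta}\mathcal{Z}_\gamma=\mathcal{C}^{\,\gamma}_{\alpha\beta}\mathcal{X}_\gamma+\mathcal{C}^{\,\varepsilon}_{\alpha\beta}N_\varepsilon^\gamma\mathcal{V}_\gamma$, and recognizing $\partial_\alpha N_\beta^\gamma-N_\alpha^\varepsilon\dot\partial_\varepsilon N_\beta^\gamma=\delta_\alpha N_\beta^\gamma$ (since $\delta_\alpha=\rho_\alpha^k\delta_k=\partial_\alpha-N_\alpha^\varepsilon\dot\partial_\varepsilon$ on functions, by \eqref{rel.coef.N}), the $\mathcal{V}_\gamma$-coefficient regroups exactly as $\mathcal{R}^{\,\gamma}_{\alpha\beta}=\mathcal{C}^{\,\varepsilon}_{\alpha\beta}N_\varepsilon^\gamma-\delta_\alpha N_\beta^\gamma+\delta_\beta N_\alpha^\gamma$, yielding $[\mathcal{X}_\alpha,\mathcal{X}_\beta]_{\mathcal{T}}=\mathcal{C}^{\,\gamma}_{\alpha\beta}\mathcal{X}_\gamma+\mathcal{R}^{\,\gamma}_{\alpha\beta}\mathcal{V}_\gamma$.

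The remaining brackets follow by the same procedure and are shorter, because all mixed brackets of the holomorphic and antiholomorphic basic sections vanish (reflecting $\rho_{\bar\alpha}^k=\rho_\alpha^{\bar k}=0$). For $[\mathcal{X}_\alpha,\mathcal{X}_{\bar\beta}]_{\mathcal{T}}$ there is no $\mathcal{X}_\gamma$ or $\mathcal{X}_{\bar\gamma}$ term, and only the Leibniz contributions survive, regrouping as $(\partial_{\bar\beta}N_\alpha^\gamma-N_{\bar\beta}^{\bar\varepsilon}\dot\partial_{\bar\varepsilon}N_\alpha^\gamma)\mathcal{V}_\gamma-(\partial_\alpha N_{\bar\beta}^{\bar\gamma}-N_\alpha^\varepsilon\dot\partial_\varepsilon N_{\bar\beta}^{\bar\gamma})\mathcal{V}_{\bar\gamma}=(\delta_{\bar\beta}N_\alpha^\gamma)\mathcal{V}_\gamma-(\delta_\alpha N_{\bar\beta}^{\bar\gamma})\mathcal{V}_{\bar\gamma}$; the brackets $[\mathcal{X}_\alpha,\mathcal{V}_\beta]_{\mathcal{T}}$ and $[\mathcal{X}_\alpha,\mathcal{V}_{\bar\beta}]_{\mathcal{T}}$ reduce to the single Leibniz terms $(\dot\partial_\beta N_\alpha^\gamma)\mathcal{V}_\gamma$ and $(\dot\partial_{\bar\beta}N_\alpha^\gamma)\mathcal{V}_\gamma$; and $[\mathcal{V}_\alpha,\mathcal{V}_\beta]_{\mathcal{T}}=[\mathcal{V}_\alpha,\mathcal{V}_{\bar\beta}]_{\mathcal{T}}=0$ is immediate, the $\mathcal{V}$'s being common to the natural and the adapted frame. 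The conjugate relations are obtained by applying complex conjugation, exactly as in the previous proposition.

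I do not expect a genuine obstacle; the only real care needed is bookkeeping — keeping the sign in $[fX,Y]_{\mathcal{T}}=f[X,Y]_{\mathcal{T}}-\rho_{\mathcal{T}}(Y)(f)X$ straight, using $\rho_{\mathcal{T}}$ (not $\rho$) consistently so that the holomorphy constraints are honoured, and spotting the recombination of $\partial_\bullet-N_\bullet^\varepsilon\dot\partial_\varepsilon$ into $\delta_\bullet$ that turns the raw anchor derivatives into the $\delta$-derivatives appearing in the statement.
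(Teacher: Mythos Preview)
Your proposal is correct and follows essentially the same approach as the paper: expand $\mathcal{X}_\alpha=\mathcal{Z}_\alpha-N_\alpha^\beta\mathcal{V}_\beta$, split the bracket by bilinearity into four pieces, apply the Leibniz rule together with the previously computed brackets of the natural frame, rewrite $\mathcal{C}^{\,\gamma}_{\alpha\beta}\mathcal{Z}_\gamma$ in the adapted frame, and regroup the anchor derivatives into $\delta$-derivatives. The paper proves only the first identity in detail and declares the others similar; your sketch of the remaining brackets is a modest elaboration but not a different method.
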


\begin{proof}
We will only prove the first identity, the computations for the others are similar. We have:
\begin{align*}
[\mathcal{X}_\alpha,\mathcal{X}_\beta] &= [\mathcal{Z}_\alpha-N^\gamma_\alpha\mathcal{V}_\gamma, \mathcal{Z}_\beta-N^\delta_\beta\mathcal{V}_\delta]\\
&= [\mathcal{Z}_\alpha, \mathcal{Z}_\beta] - [\mathcal{Z}_\alpha, N^\delta_\beta\mathcal{V}_\delta] - [N^\gamma_\alpha\mathcal{V}_\gamma, \mathcal{Z}_\beta] + [N^\gamma_\alpha\mathcal{V}_\gamma, N^\delta_\beta\mathcal{V}_\beta]\\
&= \mathcal{C}^{\:\gamma}_{\alpha\beta}\mathcal{Z}_\gamma - N^\delta_\beta[\mathcal{Z}_\alpha, \mathcal{V}_\delta] - \rho_{\mathcal{T}}(\mathcal{Z}_\alpha)(N^\delta_\beta)\mathcal{V}_\delta - N^\gamma_\alpha[\mathcal{V}_\gamma, \mathcal{Z}_\beta]\\
&\quad +\rho_{\mathcal{T}}(\mathcal{Z}_\beta)(N^\gamma_\alpha)\mathcal{V}_\gamma + N^\gamma_\alpha N^\delta_\beta[\mathcal{V}_\gamma, \mathcal{V}_\delta] + N^\gamma_\alpha\rho_{\mathcal{T}}(\mathcal{V}_\gamma)(N^\delta_\beta)\mathcal{V}_\delta\\
&\quad - N^\delta_\beta\rho_{\mathcal{T}}(\mathcal{V}_\delta)(N^\gamma_\alpha)\mathcal{V}_\gamma\\
&= \mathcal{C}^{\:\gamma}_{\alpha\beta}\mathcal{Z}_\gamma - \rho^k_\alpha\dfrac{\partial N^\delta_\beta}{\partial z^k}\mathcal{V}_\delta + \rho^h_\beta\dfrac{\partial N^\gamma_\alpha}{\partial z^h}\mathcal{V}_\gamma + N^\gamma_\alpha\dfrac{\partial N^\delta_\beta}{\partial u^\gamma}\mathcal{V}_\delta - N^\delta_\beta\dfrac{\partial N^\gamma_\alpha}{\partial u^\delta}\mathcal{V}_\gamma\\
&= \mathcal{C}^{\:\gamma}_{\alpha\beta}\mathcal{Z}_\gamma - \rho^k_\alpha\dfrac{\partial N^\gamma_\beta}{\partial z^k}\mathcal{V}_\gamma + \rho^h_\beta\dfrac{\partial N^\gamma_\alpha}{\partial z^h}\mathcal{V}_\gamma + N^\varepsilon_\alpha\dfrac{\partial N^\gamma_\beta}{\partial u^\varepsilon}\mathcal{V}_\gamma - N^\varepsilon_\beta\dfrac{\partial N^\gamma_\alpha}{\partial u^\varepsilon}\mathcal{V}_\gamma\\
&= \mathcal{C}^{\:\gamma}_{\alpha\beta}\mathcal{X}_\gamma + \mathcal{C}^{\:\varepsilon}_{\alpha\beta}N^\gamma_\varepsilon\mathcal{V}_\gamma - \rho^k_\alpha\dfrac{\partial N^\gamma_\beta}{\partial z^k}\mathcal{V}_\gamma + \rho^h_\beta\dfrac{\partial N^\gamma_\alpha}{\partial z^h}\mathcal{V}_\gamma\\
&\quad + N^\varepsilon_\alpha\dfrac{\partial N^\gamma_\beta}{\partial u^\varepsilon}\mathcal{V}_\gamma - N^\varepsilon_\beta\dfrac{\partial N^\gamma_\alpha}{\partial u^\varepsilon}\mathcal{V}_\gamma\\
&= \mathcal{C}^{\:\gamma}_{\alpha\beta}\mathcal{X}_\gamma + \mathcal{C}^{\:\varepsilon}_{\alpha\beta}N^\gamma_\varepsilon\mathcal{V}_\gamma - \delta_\alpha N^\gamma_\beta\mathcal{V}_\gamma + \delta_\beta N^\gamma_\alpha\mathcal{V}_\gamma.
\end{align*}
\end{proof}

\section{Complex Finsler structures on a Lie algebroid}

The Finsler structure will be considered as a function defined on the complexified tangent bundle of a Lie algebroid $E$. The motivation for this choice is the desire to obtain properties that are similar to those from the case of the holomorphic tangent bundle $T'M$ (\cite{1,15}). First, denote by $\widetilde{E}$ the open submanifold of $E$ consisting in the nonzero sections.

\begin{defn}
A complex Finsler structure $F$ on $E$ is a real-valued function $F:E\rightarrow\mathbb{R}$ satisfying the following conditions:

\begin{description}
\item[1)] $F$ is $C^{\infty }$-class on $\widetilde{E}$;

\item[2)] $F(z,u) \geq 0$ and $F(z,u)=0$ iff $u=0$;

\item[3)] $F(z,\lambda u)=|\lambda |^{2}F(z,u)$ for all $\lambda \in \mathbb{C}$.
\end{description}
\end{defn}

Of course, since $F$ on $E$ is a real-valued function, it acts with the same rules on the whole complexified bundle $E_{\mathbb{C}}$.

As in the case of complex vector bundles (see \cite{4}), we will say that a Finsler structure $F$ is \emph{convex} if the Hermitian matrix defined in our case by
\begin{equation}
\label{h}
h_{\alpha \bar{\beta}}=\dot{\partial}_\alpha \dot{\partial}_{\bar{\beta}}F
\end{equation}
is positive-definite. In
the following, we will assume the convexity of $F$.

\begin{defn}
The pair $(E,F)$ is called the complex Finsler Lie algebroid.
\end{defn}

The tensor $h_{\alpha \bar{\beta}}$ defines a Hermitian metric $G$ on the vertical subbundle $VT_{\mathbb{C}}E$ by $G(Z,W)=h_{\alpha \bar{\beta}}Z^{\alpha }W^{\bar{\beta}}$, where $h_{\alpha \bar{\beta}}=G(\dot{\partial}_{\alpha },\dot{\partial}_{\bar{\beta}})$.

The following result contains important properties of the Finsler function $F$ that are consequences of the third property from its definition, i.e., the homogeneity of $F$.

\begin{prop}
The Finsler function $F$ on the algebroid $E$ satisfies:
\begin{description}
\item[i)] $(\dot{\partial}_\alpha F) u^\alpha = F\ $, $(\dot{\partial}_{\bar{\alpha}}F) \bar{u}^\alpha = F$;
\item[ii)] $h_{\alpha\bar{\beta}} u^\alpha = \dot{\partial}_{\bar{\beta}}F\ $, $h_{\alpha\bar{\beta}} u^{\bar{\beta}} = \dot{\partial}_\alpha F\ $, $F = h_{\alpha\bar{\beta}}u^\alpha\bar{u}^\beta$;
\item[iii)] $(\dot{\partial}_\gamma h_{\alpha\bar{\beta}})u^\gamma = 0\ $, $(\dot{\partial}_\gamma h_{\alpha\bar{\beta}})u^\alpha = 0\ $, $(\dot{\partial}_{\bar{\gamma}} h_{\alpha\bar{\beta}})\bar{u}^\gamma = 0$;
\item[iv)] $h_{\alpha\beta}u^\alpha = 0\ $, $(\dot{\partial}_\gamma h_{\alpha\bar{\beta}})\bar{u}^\beta = h_{\alpha\gamma}\ $, where $h_{\alpha\beta} = \dot{\partial}_\alpha\dot{\partial}_\beta F$.
\end{description}
\end{prop}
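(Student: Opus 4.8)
The plan is to derive all four groups of identities from the $(1,1)$-homogeneity condition $F(z,\lambda u)=|\lambda|^2 F(z,u)$ by differentiating with respect to the parameter $\lambda$ and its conjugate, and then specializing. Writing $\lambda = e^{t+i\theta}$ or more directly treating $\lambda,\bar\lambda$ as independent, the homogeneity reads $F(z,\lambda u,\bar\lambda\bar u)=\lambda\bar\lambda\,F(z,u,\bar u)$. Applying $\partial/\partial\lambda$ gives $u^\alpha(\dot\partial_\alpha F)(z,\lambda u)=\bar\lambda\,F(z,u)$, and setting $\lambda=1$ yields the first identity in \textbf{i)}; applying $\partial/\partial\bar\lambda$ and setting $\bar\lambda=1$ gives the second. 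This is the base case; everything else follows by differentiating these relations further in $u$ or $\bar u$.

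For \textbf{ii)}, I would start from the already-proved identity $(\dot\partial_\alpha F)u^\alpha = F$, which actually holds as a function of $u$ (before setting $\lambda=1$ one has the scaled version, so differentiating the unscaled identity is legitimate). Apply $\dot\partial_{\bar\beta}$ to $(\dot\partial_\alpha F)u^\alpha = F$: since $u^\alpha$ does not depend on $\bar u$, this gives $(\dot\partial_{\bar\beta}\dot\partial_\alpha F)u^\alpha = \dot\partial_{\bar\beta}F$, i.e. $h_{\alpha\bar\beta}u^\alpha = \dot\partial_{\bar\beta}F$ by definition \eqref{h} (using symmetry of mixed partials). Conjugating, or starting instead from $(\dot\partial_{\bar\alpha}F)\bar u^\alpha=F$ and applying $\dot\partial_\beta$, gives $h_{\alpha\bar\beta}u^{\bar\beta}=\dot\partial_\alpha F$. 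Finally, contracting the first of these with $\bar u^\beta$ and using \textbf{i)} again gives $h_{\alpha\bar\beta}u^\alpha\bar u^\beta = (\dot\partial_{\bar\beta}F)\bar u^\beta = F$.

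For \textbf{iii)} and \textbf{iv)}, I would differentiate the identities in \textbf{ii)} one more time. Applying $\dot\partial_\gamma$ to $h_{\alpha\bar\beta}u^\alpha = \dot\partial_{\bar\beta}F$ gives $(\dot\partial_\gamma h_{\alpha\bar\beta})u^\alpha + h_{\gamma\bar\beta} = \dot\partial_\gamma\dot\partial_{\bar\beta}F = h_{\gamma\bar\beta}$, whence $(\dot\partial_\gamma h_{\alpha\bar\beta})u^\alpha = 0$; the other contractions in \textbf{iii)} follow by the same device applied to the companion identity or by conjugation. For \textbf{iv)}, note $h_{\alpha\beta}=\dot\partial_\alpha\dot\partial_\beta F = \dot\partial_\beta(\dot\partial_\alpha F)$, so $h_{\alpha\beta}u^\alpha = \dot\partial_\beta((\dot\partial_\alpha F)u^\alpha) - (\dot\partial_\beta u^\alpha)(\dot\partial_\alpha F)$; the first term is $\dot\partial_\beta F$ by \textbf{i)}, the second is $(\dot\partial_\alpha F)\delta_\beta^\alpha = \dot\partial_\beta F$, so the difference vanishes. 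For the last one, apply $\dot\partial_\gamma$ to $h_{\alpha\bar\beta}u^{\bar\beta}=\dot\partial_\alpha F$: since $u^{\bar\beta}$ is $\bar u$-dependent but not $u$-dependent, one gets $(\dot\partial_\gamma h_{\alpha\bar\beta})u^{\bar\beta} = \dot\partial_\gamma\dot\partial_\alpha F = h_{\alpha\gamma}$.

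I do not anticipate a genuine obstacle here — the whole proposition is a standard consequence of Euler's relation for homogeneous functions adapted to the $|\lambda|^2$-homogeneity. The only point requiring a little care is bookkeeping: which identities hold as function identities in $u$ (so that one may differentiate them again) versus which are only pointwise after setting $\lambda=1$. In fact the base relations in \textbf{i)} hold identically in $u$ (they are themselves differentiated homogeneity relations valid for all $u$), so all subsequent differentiations are justified. A secondary subtlety is keeping the holomorphic/antiholomorphic index types straight and invoking the symmetry $\dot\partial_\alpha\dot\partial_{\bar\beta}=\dot\partial_{\bar\beta}\dot\partial_\alpha$ when matching to the definition of $h_{\alpha\bar\beta}$.
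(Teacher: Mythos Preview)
Your proposal is correct and is precisely the argument the paper has in mind: the proposition is stated there without proof, merely as ``consequences of the third property from its definition, i.e., the homogeneity of $F$,'' and your derivation via Euler's relation for $|\lambda|^2$-homogeneous functions (differentiating in $\lambda$, $\bar\lambda$, $u^\alpha$, $\bar u^\beta$ successively) is exactly how those consequences are unpacked. There is nothing to add or compare.
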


\subsection{The Chern-Finsler connection}

The most well-known linear connection in Finsler geometry is the Chern-Finsler connection. In this section, we introduce such a connection on the Lie algebroid $E$.

On a complex vector bundle, the notion of normal complex linear connection does not make sense, due to the fact that the rules of change of the coefficients of a distinguished linear connection do not coincide in pairs, such as in the case of $T'M$. This is well-known from \cite{15}. But the classical Chern-Finsler connection is a normal complex linear connection. Therefore, we shall induce a Chern-Finsler linear connection on the prolongation $\mathcal{T}_{\mathbb{C}}E$ starting from a vertical connection on $E$.

As in the case of complex Finsler vector bundles (\cite{2,3,5,15}), we shall consider the functions $N_k^\beta(z,u)$ defined by 
\begin{equation}
\label{cnct.C-F}
N_k^\beta = h^{\bar{\sigma}\beta }\partial_k\dot{\partial}_{\bar{\sigma}}F.
\end{equation}
Then, using \eqref{rel.coef.N} and \eqref{act.ro.prel}, we get
\begin{equation}
\label{cncp.C-F}
N_\alpha^\beta = h^{\bar{\sigma}\beta }\rho_k^\alpha\partial_k\dot{\partial}_{\bar{\sigma}}F = h^{\bar{\sigma}\beta }\partial_\alpha\dot{\partial}_{\bar{\sigma}}F,
\end{equation}
the coefficients of a nonlinear connection on $\mathcal{T}_{\mathbb{C}}E$.

\begin{prop}
The functions $N_\alpha^\beta(z,u)$ defined by (\ref{cncp.C-F}) determine a nonlinear connection on $\mathcal{T}_{\mathbb{C}}E$, called the Chern-Finsler nonlinear connection of the Lie algebroid $E$.
\end{prop}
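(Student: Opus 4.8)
## Proof Proposal

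The plan is to verify that the functions $N_\alpha^\beta$ defined by \eqref{cncp.C-F} transform according to the rule \eqref{7.5}, since that transformation law is precisely what characterizes the coefficients of a complex nonlinear connection on the prolongation $\mathcal{T}'E$ (and hence, after complexification, on $\mathcal{T}_{\mathbb{C}}E$). Concretely, I would show that under a change of local charts $\widetilde{z}^k = \widetilde{z}^k(z)$, $\widetilde{u}^\alpha = M^\alpha_\beta(z)u^\beta$ on $E$, the quantities $N_\alpha^\beta = h^{\bar\sigma\beta}\partial_\alpha\dot\partial_{\bar\sigma}F$ satisfy
\begin{equation*}
M_\alpha^\beta\widetilde{N}_\beta^\gamma = M_\beta^\gamma N_\alpha^\beta - \rho_\alpha^k\dfrac{\partial M_\beta^\gamma}{\partial z^k}u^\beta.
\end{equation*}

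The key ingredients are: first, $F$ is a scalar, so $\widetilde{F} = F$; second, the vertical derivatives transform tensorially, $\dot\partial_\alpha = M^\gamma_\alpha\widetilde{\dot\partial}_\gamma$ (from \eqref{sch.c.T'E}), which gives that $h_{\alpha\bar\beta}$ is a Hermitian tensor, $h_{\alpha\bar\beta} = M^\gamma_\alpha\overline{M^\delta_\beta}\,\widetilde{h}_{\gamma\bar\delta}$, and consequently its inverse transforms as $h^{\bar\sigma\beta} = W^\beta_\gamma\overline{W^\sigma_\delta}\,\widetilde{h}^{\bar\delta\gamma}$ where $W$ is the inverse matrix of $M$; third, the horizontal operator $\partial_\alpha = \rho_\alpha^k\partial_k$ does \emph{not} transform tensorially — differentiating $\dot\partial_{\bar\sigma}F$ with respect to $z^k$ picks up an extra term because $\dot\partial_{\bar\sigma}$ itself has $z$-dependent transition functions. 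The steps I would carry out are: (1) write $\partial_\alpha\dot\partial_{\bar\sigma}F$ in the tilde coordinates using $\partial_k = \frac{\partial\widetilde z^h}{\partial z^k}\partial_{\widetilde h} + \frac{\partial M^\varepsilon_\tau}{\partial z^k}u^\tau\dot\partial_{\widetilde\varepsilon}$ together with $\dot\partial_{\bar\sigma} = \overline{M^\delta_\sigma}\,\widetilde{\dot\partial}_{\bar\delta}$ and the transformation \eqref{sch.c.ro} of the anchor coefficients; (2) the "good" (tensorial) part will reassemble into $M^\gamma_\beta$ times $\widetilde{N}$ after contracting with $h^{\bar\sigma\beta}$, while the "anomalous" part, coming from the $z$-derivative hitting $\overline{M^\delta_\sigma}$, will produce a term proportional to $\dot\partial_{\bar\delta}F$; (3) use property ii) from the preceding Proposition, namely $h^{\bar\sigma\beta}\dot\partial_{\bar\sigma}F = \ldots$ — more precisely, the Hermitian property and $h_{\alpha\bar\beta}u^\alpha = \dot\partial_{\bar\beta}F$ — to rewrite that anomalous term and recognize it as exactly $-\rho_\alpha^k\frac{\partial M^\gamma_\beta}{\partial z^k}u^\beta$ after raising indices with $h^{\bar\sigma\beta}$.

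Alternatively, and perhaps more cleanly, I would invoke the relation \eqref{rel.coef.N}: it suffices to check that the functions $N_k^\beta = h^{\bar\sigma\beta}\partial_k\dot\partial_{\bar\sigma}F$ from \eqref{cnct.C-F} are the coefficients of a nonlinear connection on $T'E$ (this is the classical Aikou/Munteanu computation for complex Finsler structures, reproduced for vector bundles), since then $N_\alpha^\beta = \rho_\alpha^k N_k^\beta$ automatically defines the induced connection on $\mathcal{T}'E$ by the result quoted after \eqref{rel.coef.N}. In this route the only thing to verify is that $N_k^\beta$ transforms by $M_k^\beta\widetilde N = \ldots$ — the standard nonlinear-connection rule on $T'E$ — which again hinges on the scalar nature of $F$, the tensoriality of the vertical Hessian, and the homogeneity property ii). I expect the main obstacle to be bookkeeping: correctly tracking the non-tensorial term generated when $\partial_k$ differentiates the conjugated transition matrix $\overline{M^\delta_\sigma(z)}$ — note $\overline{M^\delta_\sigma}$ depends on $\bar z$, so in fact it is $\partial_{\bar k}$ that would hit it, and one must be careful that $\partial_k$ (holomorphic derivative) annihilates $\overline{M}$, so the anomaly actually comes entirely from $\partial_k$ hitting $M^\varepsilon_\tau(z)u^\tau$ inside the chain rule for $\partial_{\widetilde z^h}$ versus $\dot\partial_{\widetilde\varepsilon}$, and from the anchor's own transformation \eqref{sch.c.ro}. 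Reconciling these two sources and showing their sum is precisely the required $-\rho_\alpha^k\frac{\partial M^\gamma_\beta}{\partial z^k}u^\beta$ is the delicate point; everything else is a direct substitution.
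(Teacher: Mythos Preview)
Your proposal is correct and follows the same approach as the paper: both verify directly that $N_\alpha^\beta$ satisfies the transformation rule \eqref{7.5} by substituting the change-of-chart laws for $h^{\bar\sigma\beta}$, the anchor \eqref{sch.c.ro}, and the base derivatives \eqref{sch.c.T'E}. One minor simplification worth noting: the homogeneity property ii) is not actually needed in your step (3) --- once the anomalous piece from the chain rule is written, it is $\widetilde{h}^{\bar\varepsilon\gamma}\rho_\alpha^h(\partial_h M^\tau_\lambda)u^\lambda\,\widetilde{\dot\partial}_\tau\widetilde{\dot\partial}_{\bar\varepsilon}F = \widetilde{h}^{\bar\varepsilon\gamma}\rho_\alpha^h(\partial_h M^\tau_\lambda)u^\lambda\,\widetilde{h}_{\tau\bar\varepsilon}$, and the metric factors contract to $\delta^\gamma_\tau$ by the inverse relation alone, yielding the required $-\rho_\alpha^k(\partial_k M^\gamma_\beta)u^\beta$ immediately.
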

\begin{proof}
It suffices to check that $N_\alpha^\beta$ satisfy the \eqref{7.5} rule of change. First, we have that
\begin{equation}
\widetilde{N}^\gamma_\beta = \widetilde{h}^{\bar{\sigma}\gamma}\widetilde{\rho}^j_\beta\widetilde{\partial}_j\widetilde{\dot{\partial}}_{\bar{\sigma}}F.
\end{equation}
Then, from \eqref{h} we get
\begin{equation*}
\widetilde{h}^{\bar{\varepsilon}\gamma} = M^\gamma_\mu M^{\bar{\varepsilon}}_{\bar{\nu}}h^{\bar{\nu}\mu},
\end{equation*}
which, replaced in the left-hand side of \eqref{7.5} together with \eqref{sch.c.ro} and \eqref{sch.c.T'E}, yields after some basic computations the right-hand side.
\end{proof}

Now, consider a partial linear connection on the vertical subbundle of the tangent bundle of $E$, $\mathcal{D}:T_{\mathbb{C}}E\times VT_{\mathbb{C}}E\rightarrow VT_{\mathbb{C}}E$, which preserves the distributions $VT_{\mathbb{C}}E$ and $\overline{VT_{\mathbb{C}}E}$ and commutes with the conjugation. Its local coefficients are 
\begin{align}
& D_{\delta_k}\dot{\partial}_{\alpha } = L_{\alpha k}^{\:\gamma }\dot{\partial}_{\gamma },\quad D_{\dot{\partial}_{\beta }}
\dot{\partial}_{\alpha } = C_{\alpha\beta }^{\:\gamma}\dot{\partial}_\gamma ,
\label{coef.D} \\
& D_{\delta_k}\dot{\partial}_{\bar{\alpha}} = L_{\bar{\alpha}k}^{\:\bar{\gamma}}\dot{\partial}_{\bar{\gamma}},\quad D_{\dot{\partial}_{\beta }}\dot{\partial}_{\bar{\alpha}} = C_{\bar{\alpha}\beta }^{\:\bar{\gamma}}\dot{\partial}_{\bar{\gamma}}  \notag
\end{align}
and their conjugates. The rules of change for their coefficients are:
\begin{align}
\notag
\widetilde{L}^{\:\tau}_{\alpha k} &= M^\tau_\gamma \dfrac{\partial z^h}{\partial\widetilde{z}^k} \bigg[ \dfrac{\partial W^\gamma_\alpha}{\partial z^h} + W^\theta_\alpha L^{\:\gamma}_{\theta h} \bigg], \\
\label{sch.L,C1}
\widetilde{C}^{\:\tau}_{\alpha\beta} &= M^\tau_\gamma W^\sigma_\beta W^\theta_\alpha C^{\:\gamma}_{\theta\sigma}, \\
\notag
\widetilde{L}^{\:\bar{\tau}}_{\bar{\alpha}k} &= \dfrac{\partial z^h}{\partial\widetilde{z}^k} M^{\bar{\tau}}_{\bar{\gamma}} W^{\bar{\theta}}_{\bar{\alpha}} L^{\:\bar{\gamma}}_{\bar{\theta}h}, \\
\notag
\widetilde{C}^{\:\bar{\tau}}_{\bar{\alpha}\beta} &= M^{\bar{\tau}}_{\bar{\gamma}} W^\sigma_\beta W^{\bar{\theta}}_{\bar{\alpha}} C^{\:\bar{\gamma}}_{\bar{\theta}\sigma}.
\end{align}
The proof for these identities is very similar to the one for the coefficients of the linear connection which will be defined in the following, therefore we will not give these computations here.

As in the case of a complex vector bundle (\cite{3,15}), we will consider
\begin{equation}
\label{L,C}
L_{\alpha k}^{\:\gamma } = h^{\bar{\sigma}\gamma}\delta_kh_{\alpha\bar{\sigma}},\quad C_{\alpha\beta }^{\:\gamma} = h^{\bar{\sigma}\gamma}\dot{\partial}_\beta h_{\alpha\bar{\sigma}}
\end{equation}
together with their conjugates. It is easy to check that they satisfy the \eqref{sch.L,C1} rules of change.

\begin{prop}
The following identity holds:
\begin{equation}
\label{rel.L,N}
L^{\:\gamma}_{\alpha k} = \dot{\partial}_\alpha N^\gamma_k.
\end{equation}
\end{prop}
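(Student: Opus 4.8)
The plan is to expand both sides of \eqref{rel.L,N} directly in terms of derivatives of $F$ and to exhibit a cancellation. First I would record the contracted form of the definition \eqref{cnct.C-F}: multiplying $N_k^\beta = h^{\bar{\sigma}\beta}\partial_k\dot{\partial}_{\bar{\sigma}}F$ by $h_{\beta\bar{\rho}}$ and using $h^{\bar{\sigma}\beta}h_{\beta\bar{\rho}}=\delta^{\bar{\sigma}}_{\bar{\rho}}$ gives $\partial_k\dot{\partial}_{\bar{\rho}}F = h_{\beta\bar{\rho}}N_k^\beta$. This is the only place where the definition of the Chern--Finsler nonlinear connection enters.

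Next I would compute $\delta_k h_{\alpha\bar{\sigma}} = \partial_k h_{\alpha\bar{\sigma}} - N_k^\beta\,\dot{\partial}_\beta h_{\alpha\bar{\sigma}}$, using $\delta_k = \partial_k - N_k^\beta\dot{\partial}_\beta$. Since $h_{\alpha\bar{\sigma}}=\dot{\partial}_\alpha\dot{\partial}_{\bar{\sigma}}F$, the coordinate fields $\partial_k,\dot{\partial}_\alpha,\dot{\partial}_{\bar{\sigma}}$ commute, and $F$ is $C^\infty$ on $\widetilde{E}$, one has $\partial_k h_{\alpha\bar{\sigma}} = \dot{\partial}_\alpha\big(\partial_k\dot{\partial}_{\bar{\sigma}}F\big)$; substituting the contracted identity from the previous step and applying the Leibniz rule yields $\partial_k h_{\alpha\bar{\sigma}} = (\dot{\partial}_\alpha h_{\beta\bar{\sigma}})N_k^\beta + h_{\beta\bar{\sigma}}\,\dot{\partial}_\alpha N_k^\beta$.

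The key point — and essentially the only one requiring attention — is the symmetry $\dot{\partial}_\alpha h_{\beta\bar{\sigma}} = \dot{\partial}_\beta h_{\alpha\bar{\sigma}}$, both expressions being equal to $\dot{\partial}_\alpha\dot{\partial}_\beta\dot{\partial}_{\bar{\sigma}}F$. Because of it, the term $(\dot{\partial}_\alpha h_{\beta\bar{\sigma}})N_k^\beta$ equals $N_k^\beta\,\dot{\partial}_\beta h_{\alpha\bar{\sigma}}$ and cancels against the second term of $\delta_k h_{\alpha\bar{\sigma}}$, leaving $\delta_k h_{\alpha\bar{\sigma}} = h_{\beta\bar{\sigma}}\,\dot{\partial}_\alpha N_k^\beta$.

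Finally, contracting with $h^{\bar{\sigma}\gamma}$ and using $h^{\bar{\sigma}\gamma}h_{\beta\bar{\sigma}}=\delta^\gamma_\beta$ together with the definition \eqref{L,C}, I obtain $L_{\alpha k}^{\,\gamma} = h^{\bar{\sigma}\gamma}\delta_k h_{\alpha\bar{\sigma}} = \dot{\partial}_\alpha N_k^\gamma$, which is \eqref{rel.L,N}. I do not expect any real obstacle: the homogeneity relations i)--iv) for $F$ are not needed, and the whole argument reduces to the cancellation produced by the symmetry of the third-order derivatives of $F$; the only bookkeeping care is keeping the holomorphic index $k$ (a base index from $z^k$) distinct from the fibre indices $\alpha,\beta,\gamma$.
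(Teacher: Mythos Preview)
Your proof is correct and follows essentially the same route as the paper's: both computations reduce to the symmetry $\dot{\partial}_\alpha h_{\beta\bar{\sigma}} = \dot{\partial}_\beta h_{\alpha\bar{\sigma}}$ of the third derivatives of $F$. The only organizational difference is that the paper starts from $\dot{\partial}_\alpha N^\gamma_k$ and invokes the formula $\dot{\partial}_\alpha h^{\bar{\sigma}\gamma} = -h^{\bar{\varepsilon}\gamma}h^{\bar{\sigma}\tau}\dot{\partial}_\alpha h_{\tau\bar{\varepsilon}}$ for the derivative of the inverse metric, whereas you start from $\delta_k h_{\alpha\bar{\sigma}}$ and contract $N_k^\beta$ with $h_{\beta\bar{\rho}}$ first, which is a slightly cleaner but equivalent path.
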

\begin{proof}
From $h^{\bar{\sigma}\tau}h_{\tau\bar{\varepsilon}} = \delta^{\bar{\sigma}}_{\bar{\varepsilon}}$, we get
\begin{equation*}
\dfrac{\partial h^{\bar{\sigma}\gamma}}{\partial u^\alpha} = -h^{\bar{\varepsilon}\gamma}h^{\bar{\sigma}\tau}\dfrac{\partial h_{\tau\bar{\varepsilon}}}{\partial u^\alpha}.
\end{equation*}
Using also the properties derived from the homogeneity of $F$, we have:
\begin{align*}
\dot{\partial}_\alpha N^\gamma_k
&= \dfrac{\partial}{\partial u^\alpha}\bigg( h^{\bar{\sigma}\gamma}\dfrac{\partial^2F}{\partial z^k\partial\bar{u}^\sigma} \bigg)\\
&= \dfrac{\partial h^{\bar{\sigma}\gamma}}{\partial u^\alpha}\dfrac{\partial^2F}{\partial z^k \partial \bar{u}^\sigma} + h^{\bar{\sigma}\gamma}\dfrac{\partial h_{\alpha\bar{\sigma}}}{\partial z^k}\\
&= h^{\bar{\sigma}\gamma}\dfrac{\partial h_{\alpha\bar{\sigma}}}{\partial z^k} - h^{\bar{\varepsilon}\gamma}h^{\bar{\sigma}\tau}\dfrac{\partial h_{\tau\bar{\varepsilon}}}{\partial u^\alpha}\dfrac{\partial^2F}{\partial z^k \partial \bar{u}^\sigma}\\
&= h^{\bar{\sigma}\gamma}\dfrac{\partial h_{\alpha\bar{\sigma}}}{\partial z^k} - h^{\bar{\varepsilon}\gamma}\dfrac{\partial h_{\tau\bar{\varepsilon}}}{\partial u^\alpha} N^\tau_k\\
&= h^{\bar{\sigma}\gamma}\dfrac{\partial h_{\alpha\bar{\sigma}}}{\partial z^k} - h^{\bar{\sigma}\gamma}N^\alpha_k\dfrac{\partial h_{\alpha\bar{\sigma}}}{\partial u^\tau} \\
&= h^{\bar{\sigma}\gamma}\dfrac{\delta h_{\alpha\bar{\sigma}}}{\delta z^k} = L^{\:\gamma}_{\alpha k}.
\end{align*}
\end{proof}

We now introduce an $N$-complex linear connection on the complexified prolongation of $E$, $\mathcal{D}:\mathcal{T}_{\mathbb{C}}E\times \mathcal{T}_{\mathbb{C}}E\rightarrow \mathcal{T}_{\mathbb{C}}E$, by 
\begin{align}
& \mathcal{D}_{\mathcal{X}_\beta }\mathcal{V}_{\alpha } = L_{\alpha \beta }^{\:\gamma }\mathcal{V}_{\gamma },\quad \mathcal{D}_{\mathcal{V}_{\beta }}\mathcal{V}_{\alpha } = C_{\alpha\beta }^{\:\gamma}\mathcal{V}_{\gamma},
\label{coef.D'} \\
& \mathcal{D}_{\mathcal{X}_\beta}\mathcal{V}_{\bar{\alpha}} = L_{\bar{\alpha}\beta }^{\:\bar{\gamma}}\mathcal{V}_{\bar{\gamma}},\quad \mathcal{D}_{\mathcal{V}_{\beta }}\mathcal{V}_{\bar{\alpha}} = C_{\bar{\alpha}\beta }^{\:\bar{\gamma}}\mathcal{V}_{\bar{\gamma}}  \notag
\end{align}
and, since $\mathcal{D}$ is a normal connection, it must preserve the distributions, that is,
\begin{align*}
& \mathcal{D}_{\mathcal{X}_\beta }\mathcal{X}_{\alpha } = L_{\alpha \beta }^{\:\gamma }\mathcal{X}_{\gamma },\quad \mathcal{D}_{\mathcal{V}_{\beta }}\mathcal{X}_{\alpha } = C_{\alpha\beta }^{\:\gamma}\mathcal{X}_{\gamma},\\
& \mathcal{D}_{\mathcal{X}_\beta}\mathcal{X}_{\bar{\alpha}} = L_{\bar{\alpha}\beta }^{\:\bar{\gamma}}\mathcal{X}_{\bar{\gamma}},\quad \mathcal{D}_{\mathcal{V}_{\beta }}\mathcal{X}_{\bar{\alpha}} = C_{\bar{\alpha}\beta }^{\:\bar{\gamma}}\mathcal{X}_{\bar{\gamma}}.
\end{align*}

\begin{prop}
The rules of change for the coefficients of the connection $\mathcal{D}$ are:
\begin{align}
\notag
\widetilde{L}^{\:\tau}_{\alpha\beta} &= M^\tau_\gamma W^\sigma_\beta \big[ \rho^k_\sigma(\partial_kW^\gamma_\alpha) + W^\theta_\alpha L^{\:\gamma}_{\theta\sigma} \big],\\
\label{sch.L,C2}
\widetilde{C}^{\:\tau}_{\alpha\beta} &= M^\tau_\gamma W^\sigma_\beta W^\theta_\alpha C^{\:\gamma}_{\theta\sigma} \\
\notag
\widetilde{L}^{\:\bar{\tau}}_{\bar{\alpha}\beta} &= M^{\bar{\tau}}_{\bar{\gamma}} W^\sigma_\beta W^{\bar{\theta}}_{\bar{\alpha}} L^{\:\bar{\gamma}}_{\bar{\theta}\sigma},\\
\notag
\widetilde{C}^{\:\bar{\tau}}_{\bar{\alpha}\beta} &= M^{\bar{\tau}}_{\bar{\gamma}} W^\sigma_\beta W^{\bar{\theta}}_{\bar{\alpha}} C^{\:\bar{\gamma}}_{\bar{\theta}\sigma}.
\end{align}
\end{prop}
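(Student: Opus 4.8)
The plan is to verify the four identities by direct substitution into the defining expressions of the coefficients, exactly as in the proof of the Chern--Finsler nonlinear connection above. Recalling \eqref{L,C} and that on the prolongation the coefficients are $L^{\:\gamma}_{\alpha\beta}=h^{\bar\sigma\gamma}\delta_\beta h_{\alpha\bar\sigma}$ with $\delta_\beta=\rho^k_\beta\delta_k$ (so that $L^{\:\gamma}_{\alpha\beta}=\rho^k_\beta L^{\:\gamma}_{\alpha k}$) and $C^{\:\gamma}_{\alpha\beta}=h^{\bar\sigma\gamma}\dot{\partial}_\beta h_{\alpha\bar\sigma}$, together with their conjugates, in the new chart we have
\[
\widetilde{L}^{\:\tau}_{\alpha\beta}=\widetilde{\rho}^k_\beta\,\widetilde{h}^{\bar\sigma\tau}\,\widetilde{\delta}_k\widetilde{h}_{\alpha\bar\sigma},\qquad
\widetilde{C}^{\:\tau}_{\alpha\beta}=\widetilde{h}^{\bar\sigma\tau}\,\widetilde{\dot{\partial}}_\beta\widetilde{h}_{\alpha\bar\sigma}
\]
and the barred analogues. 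I would then substitute: $\widetilde{h}_{\alpha\bar\sigma}=W^\mu_\alpha W^{\bar\nu}_{\bar\sigma}h_{\mu\bar\nu}$ and its inverse $\widetilde{h}^{\bar\sigma\tau}=M^\tau_\lambda M^{\bar\sigma}_{\bar\rho}h^{\bar\rho\lambda}$ (both from \eqref{h} and the rule $\widetilde{\dot{\partial}}_\alpha=W^\mu_\alpha\dot{\partial}_\mu$ that follows from \eqref{sch.c.T'E}); the rule \eqref{sch.c.ro} for $\widetilde{\rho}^k_\beta$; and the transformation $\widetilde{\delta}_l=\dfrac{\partial z^k}{\partial\widetilde{z}^l}\delta_k$ of the adapted horizontal frame on $T'E$ (equivalently \eqref{7.5}, see \cite{8,9}). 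Since $F$ is a function on $E$, $\widetilde{F}=F$, so nothing arises from $F$ itself.

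Carrying out the substitution, the factor $\partial\widetilde{z}^k/\partial z^h$ from \eqref{sch.c.ro} cancels the factor $\partial z^h/\partial\widetilde{z}^k$ coming from $\widetilde{\delta}_k$, the $M^{\bar\sigma}_{\bar\rho}$ in $\widetilde{h}^{\bar\sigma\tau}$ contracts with the $W^{\bar\nu}_{\bar\sigma}$ from $\widetilde{h}_{\alpha\bar\sigma}$, and using $h^{\bar\nu\lambda}h_{\mu\bar\nu}=\delta^\lambda_\mu$ one is left precisely with
\[
\widetilde{L}^{\:\tau}_{\alpha\beta}=M^\tau_\gamma W^\sigma_\beta\big[\rho^k_\sigma(\partial_k W^\gamma_\alpha)+W^\theta_\alpha L^{\:\gamma}_{\theta\sigma}\big].
\]
The inhomogeneous term $\rho^k_\sigma(\partial_k W^\gamma_\alpha)$ is exactly the contribution of $\delta_k$ acting on the holomorphic matrix $W^\mu_\alpha$: the $N^\varepsilon_k\dot{\partial}_\varepsilon$ part of $\delta_k$ annihilates $W$ (which is $u$-independent), and $\delta_k$ also annihilates the factor $W^{\bar\nu}_{\bar\sigma}$ (antiholomorphic, $u$-independent). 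For $\widetilde{C}^{\:\tau}_{\alpha\beta}$ one repeats the computation with $\widetilde{\dot{\partial}}_\beta$ in place of $\widetilde{\delta}_k$; no $\partial_k$ now acts on $W^\gamma_\alpha(z)$, so the inhomogeneous term is absent and only $M^\tau_\gamma W^\sigma_\beta W^\theta_\alpha C^{\:\gamma}_{\theta\sigma}$ survives. For the two barred coefficients the same steps apply, but the matrix in the $\alpha$-slot is now $W^{\bar\theta}_{\bar\alpha}=\overline{W^\theta_\alpha}$, which is antiholomorphic and hence annihilated both by $\partial_k$ (inside $\delta_k$) and by $\dot{\partial}_\beta$; thus neither barred rule acquires an inhomogeneous term, giving $\widetilde{L}^{\:\bar\tau}_{\bar\alpha\beta}=M^{\bar\tau}_{\bar\gamma}W^\sigma_\beta W^{\bar\theta}_{\bar\alpha}L^{\:\bar\gamma}_{\bar\theta\sigma}$ and $\widetilde{C}^{\:\bar\tau}_{\bar\alpha\beta}=M^{\bar\tau}_{\bar\gamma}W^\sigma_\beta W^{\bar\theta}_{\bar\alpha}C^{\:\bar\gamma}_{\bar\theta\sigma}$.

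The same four formulas can be obtained more conceptually by recalling that $\mathcal{D}$ is a connection on $\mathcal{T}_{\mathbb{C}}E$: substituting \eqref{sch.rep.prel} and its conjugate into $\widetilde{\mathcal{D}}_{\widetilde{\mathcal{X}}_\beta}\widetilde{\mathcal{V}}_\alpha=\widetilde{L}^{\:\tau}_{\alpha\beta}\widetilde{\mathcal{V}}_\tau$, using $\mathbb{C}$-linearity in the first slot and the Leibniz rule $\mathcal{D}_X(fs)=\rho_{\mathcal{T}}(X)(f)s+f\mathcal{D}_X s$ in the second, the values $\rho_{\mathcal{T}}(\mathcal{X}_\sigma)=\delta_\sigma$ and $\rho_{\mathcal{T}}(\mathcal{V}_\sigma)=\dot{\partial}_\sigma$ from \eqref{act.ro.prel}, and finally $\mathcal{V}_\gamma=M^\tau_\gamma\widetilde{\mathcal{V}}_\tau$; this is also the argument the text has in mind when it says the proof of \eqref{sch.L,C1} is similar. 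In either form the proof is purely formal, so there is no real obstacle; the one point demanding care is the index bookkeeping, above all keeping track of which slot carries the holomorphic matrix $W$ and which the antiholomorphic $\overline{W}$, since it is precisely this --- through the presence or absence of a $\partial_k W$ term --- that distinguishes the four cases. A useful consistency check is that, via $L^{\:\gamma}_{\theta\sigma}=\rho^k_\sigma L^{\:\gamma}_{\theta k}$ and \eqref{sch.c.ro}, the rules obtained collapse to the already-recorded rules \eqref{sch.L,C1} on $T_{\mathbb{C}}E$.
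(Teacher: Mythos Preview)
Your second, ``more conceptual'' approach—expanding $\mathcal{D}_{\widetilde{\mathcal{X}}_\beta}\widetilde{\mathcal{X}}_\alpha$ (or $\widetilde{\mathcal{V}}_\alpha$) via \eqref{sch.rep.prel}, $\mathbb{C}$-linearity in the first slot, the Leibniz rule in the second, and $\rho_{\mathcal{T}}(\mathcal{X}_\sigma)=\delta_\sigma$—is exactly the paper's proof. The paper carries it out for $\mathcal{X}_\alpha$ rather than $\mathcal{V}_\alpha$, but since $\mathcal{D}$ is normal this makes no difference.

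Your primary approach, however, does not prove the proposition as stated. At this point in the text $\mathcal{D}$ is an \emph{arbitrary} $N$-complex linear connection on $\mathcal{T}_{\mathbb{C}}E$, defined only by the abstract relations \eqref{coef.D'}; the Chern--Finsler expressions $L^{\:\gamma}_{\alpha\beta}=h^{\bar\sigma\gamma}\delta_\beta h_{\alpha\bar\sigma}$ that you take as starting data are introduced only in the subsequent Lemma and Theorem, and those results \emph{rely} on the present proposition (the Lemma is proved by checking the first rule of \eqref{sch.L,C2}, and the Remark after the Theorem explicitly refers back to \eqref{sch.L,C2}). So plugging in the Chern--Finsler formulas here is both circular and, even if one ignored the circularity, proves only a special case rather than the general transformation law. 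Your explicit-substitution computation is essentially the content of that later Remark, not of this Proposition; keep it for there, and use only the connection axioms here.
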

\begin{proof}
We only prove the first identity. Using \eqref{sch.rep.prel}, we have:
\begin{align*}
\mathcal{D}_{\widetilde{\mathcal{X}}_\beta}\widetilde{\mathcal{X}}_\alpha
&= \mathcal{D}_{W^\sigma_\beta\mathcal{X}_\sigma}(W^\gamma_\alpha\mathcal{X}_\gamma)\\
&= W^\sigma_\beta \big[ \big( \rho_{\mathcal{T}}(\mathcal{X}_\sigma)W^\gamma_\alpha \big)\mathcal{X}_\gamma + W^\gamma_\alpha \mathcal{D}_{\mathcal{X}_\sigma}\mathcal{X}_\gamma \big]\\
&= W^\sigma_\beta \big[ \rho^k_\sigma(\delta_k W^\gamma_\alpha)\mathcal{X}_\gamma + W^\gamma_\alpha L^{\:\theta}_{\gamma\sigma} \mathcal{X}_\theta \big]\\
&= W^\sigma_\beta \big[ \rho^k_\sigma(\partial_k W^\gamma_\alpha) + W^\theta_\alpha L^{\:\gamma}_{\theta\sigma} \big] \mathcal{X}_\gamma.
\end{align*}
On the other hand,
\begin{equation*}
\mathcal{D}_{\widetilde{\mathcal{X}}_\beta}\widetilde{\mathcal{X}}_\alpha = \widetilde{L}^{\:\sigma}_{\alpha\beta}W^\gamma_\sigma\mathcal{X}_\gamma,
\end{equation*}
and identifying the coefficients gives the first rule of change. The others can be proved in a similar manner.
\end{proof}

The relation between the coefficients of the two linear connections $D$ on $T_{\mathbb{C}}E$ and $\mathcal{D}$ on $\mathcal{T}_{\mathbb{C}}E$ is given by the following
\begin{lem}
The functions $L^{\:\gamma}_{\alpha\beta}$ given by
\begin{equation}
\label{rel.L}
L^{\:\gamma}_{\alpha\beta} = \rho^k_\beta L^{\:\gamma}_{\alpha k}
\end{equation}
are the coefficients of a normal complex linear connection on $\mathcal{T}_{\mathbb{C}}E$.
\end{lem}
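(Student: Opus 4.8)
The plan is to verify that the functions $L^{\:\gamma}_{\alpha\beta}=\rho^k_\beta L^{\:\gamma}_{\alpha k}$, together with the coefficients $C^{\:\gamma}_{\alpha\beta}$ of $D$ (which carry no base-manifold index to contract with $\rho$ and are simply retained) and the conjugate coefficients $L^{\:\bar\gamma}_{\bar\alpha\beta}=\rho^k_\beta L^{\:\bar\gamma}_{\bar\alpha k}$, $C^{\:\bar\gamma}_{\bar\alpha\beta}$, satisfy precisely the transformation rules \eqref{sch.L,C2}. By the discussion preceding the lemma, it is exactly these rules that characterize the coefficients of an $N$-complex linear connection on $\mathcal{T}_{\mathbb{C}}E$ in the sense of \eqref{coef.D'}. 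Normality is then automatic: $\mathcal{X}_\alpha$ and $\mathcal{V}_\alpha$ transform by one and the same matrix $W^\beta_\alpha$ by \eqref{sch.rep.prel}, so the identical set of coefficients may legitimately be used on the horizontal part $\mathcal{D}_{\mathcal{X}_\beta}\mathcal{X}_\alpha$ and the vertical part $\mathcal{D}_{\mathcal{X}_\beta}\mathcal{V}_\alpha$; and since $D$ commutes with conjugation, so does $\mathcal{D}$. Hence everything reduces to a transformation-rule check, carried out just as in the proof of the preceding proposition.

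First I would write $\widetilde{L}^{\:\tau}_{\alpha\beta}=\widetilde{\rho}^{\,k}_\beta\,\widetilde{L}^{\:\tau}_{\alpha k}$ and insert the change rule \eqref{sch.c.ro} for $\widetilde{\rho}^{\,k}_\beta$ and the change rule \eqref{sch.L,C1} for $\widetilde{L}^{\:\tau}_{\alpha k}$. The two Jacobian factors that occur, namely $\partial\widetilde{z}^{k}/\partial z^{h}$ coming from $\widetilde{\rho}$ and $\partial z^{h'}/\partial\widetilde{z}^{k}$ coming from $\widetilde{L}$, contract by the chain rule to $\delta^{h'}_{h}$, leaving
\begin{equation*}
\widetilde{L}^{\:\tau}_{\alpha\beta}=M^\tau_\gamma W^\sigma_\beta\,\rho^{h}_\sigma\Big(\partial_h W^\gamma_\alpha+W^\theta_\alpha L^{\:\gamma}_{\theta h}\Big).
\end{equation*}
Then $\rho^{h}_\sigma L^{\:\gamma}_{\theta h}=L^{\:\gamma}_{\theta\sigma}$ by the defining relation \eqref{rel.L}, while $\rho^{h}_\sigma\partial_h W^\gamma_\alpha$ is already the term $\rho^{k}_\sigma(\partial_k W^\gamma_\alpha)$ appearing in \eqref{sch.L,C2}; this reproduces the first line of \eqref{sch.L,C2} verbatim.

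The same manipulation, starting from $\widetilde{L}^{\:\bar\tau}_{\bar\alpha\beta}=\widetilde{\rho}^{\,k}_\beta\,\widetilde{L}^{\:\bar\tau}_{\bar\alpha k}$ and using the third line of \eqref{sch.L,C1}, yields the third line of \eqref{sch.L,C2}; here no derivative-of-$W$ term is generated because $L^{\:\bar\gamma}_{\bar\alpha k}$ already transforms tensorially in the barred indices up to a single Jacobian factor, which is again absorbed by the Jacobian in \eqref{sch.c.ro}. Finally, the $C$-coefficients have no base index, so $\widetilde{C}^{\:\tau}_{\alpha\beta}$ and $\widetilde{C}^{\:\bar\tau}_{\bar\alpha\beta}$ are literally the second and fourth lines of \eqref{sch.L,C1}, which coincide with those of \eqref{sch.L,C2}. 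All four rules of \eqref{sch.L,C2} thus hold, which completes the argument.

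I expect the only real obstacle to be the index bookkeeping in the product of the two Jacobian matrices and the correct identification of the contracted index; once this is set up the identities are forced. It is also worth recording that $\rho^k_\beta=\rho^k_\beta(z)$ is holomorphic, so the contraction $\rho^k_\beta L^{\:\gamma}_{\alpha k}$ does not disturb the bidegree of the coefficients and is compatible with $\rho_{\mathcal{T}}(\mathcal{X}_\beta)=\delta_\beta=\rho^k_\beta\delta_k$, which is exactly what makes \eqref{rel.L} the natural definition.
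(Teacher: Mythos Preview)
Your proposal is correct and follows exactly the approach indicated by the paper, which simply states that the proof ``consists of checking that the functions $L^{\:\gamma}_{\alpha\beta}$ satisfy the first rule from Proposition 2.4, which is immediate.'' You have supplied the explicit computation the paper omits: inserting \eqref{sch.c.ro} and \eqref{sch.L,C1}, contracting the two Jacobians via the chain rule, and recognizing $\rho^h_\sigma L^{\:\gamma}_{\theta h}=L^{\:\gamma}_{\theta\sigma}$ to recover the first line of \eqref{sch.L,C2}; your additional verification of the remaining three rules and your remark on normality are more than the paper itself records.
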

\begin{proof}
It consists of checking that the functions $L^{\:\gamma}_{\alpha\beta}$ satisfy the first rule from Proposition 2.4, which is immediate.
\end{proof}

Let $\mathcal{T}(Z,W) = \mathcal{D}_ZW - \mathcal{D}_WZ - [Z,W]_{\mathcal{T}}$ be the torsion of the connection $\mathcal{D}$ on $\mathcal{T}_{\mathbb{C}}E$ and denote by $h\mathcal{T}(Z,W)$, $v\mathcal{T}(Z,W)$, $\bar{h}\mathcal{T}(Z,W)$ and $\bar{v}\mathcal{T}(Z,W)$ its components with respect to the \eqref{split} splitting. A basic computation using Proposition \ref{brackets} leads to: 
\begin{align*}
\mathcal{T}(\mathcal{X}_\alpha,\mathcal{X}_\beta) &= \big(L^{\:\gamma}_{\beta\alpha}-L^{\:\gamma}_{\alpha\beta}-\mathcal{C}^{\:\gamma}_{\alpha\beta}\big)\mathcal{X}_\gamma - \mathcal{R}^{\:\gamma}_{\alpha\beta}\mathcal{V}_\gamma,\\
\mathcal{T}(\mathcal{X}_\alpha,\mathcal{X}_{\bar{\beta}}) &= -L^{\:\gamma}_{\alpha\bar{\beta}}\mathcal{X}_\gamma + L^{\:\bar{\gamma}}_{\bar{\beta}\alpha}\mathcal{X}_{\bar{\gamma}}  -(\delta_{\bar{\beta}}N^\gamma_\alpha)\mathcal{V}_\gamma + (\delta_\alpha N^{\bar{\gamma}}_{\bar{\beta}})\mathcal{V}_{\bar{\gamma}},\\
\mathcal{T}(\mathcal{X}_\alpha,\mathcal{V}_\beta) &= -C^{\:\gamma}_{\alpha\beta}\mathcal{X}_\gamma + \big(L^{\:\gamma}_{\beta\alpha}-(\dot{\partial}_\beta N^\gamma_\alpha)\big)\mathcal{V}_\gamma,\\
\mathcal{T}(\mathcal{X}_\alpha,\mathcal{V}_{\bar{\beta}}) &= -C^{\:\gamma}_{\alpha\bar{\beta}}\mathcal{X}_\gamma - (\dot{\partial}_{\bar{\beta}}N^\gamma_\alpha)\mathcal{V}_\gamma + L^{\:\bar{\gamma}}_{\bar{\beta}\alpha}\mathcal{V}_{\bar{\gamma}},\\
\mathcal{T}(\mathcal{V}_\alpha,\mathcal{V}_\beta) &= \big(C^{\:\gamma}_{\beta\alpha} - C^{\:\gamma}_{\alpha\beta}\big)\mathcal{V}_\gamma,\\
\mathcal{T}(\mathcal{V}_\alpha,\mathcal{V}_{\bar{\beta}}) &= -C^{\:\bar{\gamma}}_{\alpha\bar{\beta}}\mathcal{V}_\gamma + C^{\:\bar{\gamma}}_{\bar{\beta}\alpha}\mathcal{V}_{\bar{\gamma}}
\end{align*}
and also their conjugates. In a similar manner we can compute the local expressions of the curvature of the connection $\mathcal{D}$, defined by $\mathcal{R}(Z,W)V = \mathcal{D}_Z\mathcal{D}_WV - \mathcal{D}_W\mathcal{D}_ZV - \mathcal{D}_{[Z,W]}V$.

Let us now impose that $\mathcal{D}$ is a $(1,0)$-connection, i.e., $L_{\bar{\alpha}\beta }^{\:\bar{\gamma}}=C_{\bar{\alpha}\beta }^{\:\bar{\gamma}}=0$, and consider the functions $L^{\:\gamma}_{\alpha k}$ from \eqref{L,C}. Using \eqref{rel.L}, we get the following
\begin{thm}
The functions $L^{\:\gamma}_{\alpha\beta}$ given by
\begin{equation}
\label{Lprel}
L^{\:\gamma}_{\alpha\beta} = \rho^k_\beta L^{\:\gamma}_{\alpha k} = h^{\bar{\sigma}\gamma}\delta_\beta h_{\alpha\bar{\sigma}}
\end{equation}
are the coefficients of a linear connection of $(1,0)$-type on $\mathcal{T}_{\mathbb{C}}E$, called the Chern-Finsler connection of the algebroid $E$. It is induced by the vertical connection \eqref{L,C}.
\end{thm}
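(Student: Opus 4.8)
The plan is to verify that the functions $L^{\:\gamma}_{\alpha\beta}$ defined in \eqref{Lprel} satisfy two things: first, that they obey the first rule of change from Proposition~2.4 (so that they genuinely define a normal complex linear connection on $\mathcal{T}_{\mathbb{C}}E$), and second, that the $(1,0)$-condition on $\mathcal{D}$ together with the induction from the vertical connection \eqref{L,C} forces exactly the stated local expression. The first point is the most substantive; the second is bookkeeping once the setup is in place.

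For the transformation law, I would start from $L^{\:\gamma}_{\alpha k}=h^{\bar\sigma\gamma}\delta_k h_{\alpha\bar\sigma}$ as in \eqref{L,C}. These coefficients are already known (it is asserted in the text after \eqref{L,C}) to satisfy the \eqref{sch.L,C1} rule $\widetilde L^{\:\tau}_{\alpha k}=M^\tau_\gamma\frac{\partial z^h}{\partial\widetilde z^k}\big[\frac{\partial W^\gamma_\alpha}{\partial z^h}+W^\theta_\alpha L^{\:\gamma}_{\theta h}\big]$. Now I contract with $\widetilde\rho^k_\beta$ using $L^{\:\gamma}_{\alpha\beta}=\rho^k_\beta L^{\:\gamma}_{\alpha k}$ and \eqref{rel.L}, and apply the anchor change rule \eqref{sch.c.ro}, $\widetilde\rho^k_\beta=W^\sigma_\beta\rho^h_\sigma\frac{\partial\widetilde z^k}{\partial z^h}$. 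The factor $\frac{\partial z^h}{\partial\widetilde z^k}$ in $\widetilde L^{\:\tau}_{\alpha k}$ then combines with $\frac{\partial\widetilde z^k}{\partial z^{h'}}$ coming from $\widetilde\rho^k_\beta$ to give $\delta^h_{h'}$, and what remains is
\[
\widetilde L^{\:\tau}_{\alpha\beta}=M^\tau_\gamma W^\sigma_\beta\Big[\rho^h_\sigma\dfrac{\partial W^\gamma_\alpha}{\partial z^h}+W^\theta_\alpha\,\rho^h_\sigma L^{\:\gamma}_{\theta h}\Big]=M^\tau_\gamma W^\sigma_\beta\big[\rho^k_\sigma(\partial_k W^\gamma_\alpha)+W^\theta_\alpha L^{\:\gamma}_{\theta\sigma}\big],
\]
which is precisely the first line of \eqref{sch.L,C2}. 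By Proposition~2.4 (equivalently the Lemma just proved), this is enough to conclude that $L^{\:\gamma}_{\alpha\beta}$ are the coefficients of a normal complex linear connection on $\mathcal{T}_{\mathbb{C}}E$; the defining equations \eqref{coef.D'} and the subsequent display (preservation of the distributions) then specify $\mathcal{D}$ completely on the adapted frame once the $\bar\gamma$-coefficients are known.

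It remains to feed in the $(1,0)$-hypothesis $L^{\:\bar\gamma}_{\bar\alpha\beta}=C^{\:\bar\gamma}_{\bar\alpha\beta}=0$ and the choice \eqref{L,C}: then $\mathcal{D}_{\mathcal{X}_\beta}\mathcal{V}_{\bar\alpha}=0$, so $\mathcal{D}$ is genuinely of $(1,0)$-type, and the only nonzero $\mathcal{X}$-derivative coefficients are $L^{\:\gamma}_{\alpha\beta}$, for which \eqref{rel.L} together with $L^{\:\gamma}_{\alpha k}=h^{\bar\sigma\gamma}\delta_k h_{\alpha\bar\sigma}$ and $\delta_\beta=\rho^k_\beta\delta_k$ gives $L^{\:\gamma}_{\alpha\beta}=\rho^k_\beta h^{\bar\sigma\gamma}\delta_k h_{\alpha\bar\sigma}=h^{\bar\sigma\gamma}\delta_\beta h_{\alpha\bar\sigma}$, as claimed in \eqref{Lprel}. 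That this connection is \emph{induced} by the vertical connection \eqref{L,C} is immediate from the construction: the coefficients of $\mathcal{D}$ on $\mathcal{T}_{\mathbb{C}}E$ are obtained from those of $D$ on $VT_{\mathbb{C}}E$ by the pointwise contraction \eqref{rel.L} with the anchor, and Proposition~2.5 (via \eqref{rel.L,N}) ensures compatibility with the Chern-Finsler nonlinear connection $N^\gamma_k=h^{\bar\sigma\gamma}\partial_k\dot\partial_{\bar\sigma}F$.

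The main obstacle I anticipate is purely organizational rather than conceptual: making sure the contraction of the $\frac{\partial z^h}{\partial\widetilde z^k}$ Jacobian in $\widetilde L^{\:\tau}_{\alpha k}$ against the Jacobian hidden inside $\widetilde\rho^k_\beta$ is carried out correctly, so that the inhomogeneous term $M^\tau_\gamma\frac{\partial z^h}{\partial\widetilde z^k}\frac{\partial W^\gamma_\alpha}{\partial z^h}$ transmutes cleanly into $M^\tau_\gamma W^\sigma_\beta\rho^k_\sigma(\partial_k W^\gamma_\alpha)$ without a stray Jacobian factor surviving. Everything else reduces to the already-established transformation behaviour of $h_{\alpha\bar\sigma}$ under \eqref{h} and of the anchor under \eqref{sch.c.ro}.
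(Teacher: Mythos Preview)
Your proposal is correct and follows essentially the same route as the paper. The paper does not give a separate proof of the theorem: it simply observes that imposing the $(1,0)$-condition $L^{\:\bar\gamma}_{\bar\alpha\beta}=C^{\:\bar\gamma}_{\bar\alpha\beta}=0$ and feeding the coefficients \eqref{L,C} into the Lemma (whose proof is just ``check the first rule of \eqref{sch.L,C2}, which is immediate'') yields \eqref{Lprel}; the subsequent Remark notes that one may alternatively verify the transformation law for \eqref{Lprel} directly---exactly the computation you spell out with the Jacobian contraction $\widetilde\rho^k_\beta\cdot\frac{\partial z^h}{\partial\widetilde z^k}$.
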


\begin{rem}
The theorem can also be proved directly by checking that \eqref{Lprel} satisfy the first rule of change in \eqref{sch.L,C2}.
\end{rem}

We can conclude that the Chern-Finsler connection of the Lie algebroid $E$ is given by
\begin{equation}
\label{C-F}
N_\alpha^\beta = h^{\bar{\sigma}\beta }\partial_\alpha\dot{\partial}_{\bar{\sigma}}F,\quad L^{\:\gamma}_{\alpha\beta} = h^{\bar{\sigma}\gamma}\delta_\beta h_{\alpha\bar{\sigma}},\quad C_{\alpha\beta }^{\:\gamma} = h^{\bar{\sigma}\gamma}\dot{\partial}_\beta h_{\alpha\bar{\sigma}}
\end{equation}
and $L_{\bar{\alpha}\beta }^{\:\bar{\gamma}}=C_{\bar{\alpha}\beta }^{\:\bar{\gamma}}=0$. Also, we note that
\begin{equation}
\label{C}
C^{\:\gamma}_{\alpha\beta} = C^{\:\gamma}_{\beta\alpha}
\end{equation}
and, due to \eqref{rel.L} and \eqref{rel.L,N},
\begin{equation}
\label{L,N}
L^{\:\gamma}_{\alpha\beta} = \dot{\partial}_\alpha N^\gamma_\beta.
\end{equation}

The Chern-Finsler connection of $E$ has a similar property to the classical Chern-Finsler connection.

\begin{prop}
The adapted frame of fields corresponding to the Chern-Finsler connection \eqref{cncp.C-F} satisfies the identity 
\begin{equation} 
\label{cr.C-F}
\rho_{\mathcal{T}}\big([\mathcal{X}_\alpha,\mathcal{X}_\beta]\big) = 0.
\end{equation}
\end{prop}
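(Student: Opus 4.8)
The plan is to transport the anchor $\rho_{\mathcal{T}}$ through the bracket formula of Proposition \ref{brackets} and to exploit the special shape of the Chern-Finsler nonlinear connection. Since $\rho_{\mathcal{T}}$ is $C^\infty(E)$-linear (it is a vector bundle morphism, indeed the anchor of the prolongation Lie algebroid) and $\rho_{\mathcal{T}}(\mathcal{X}_\gamma)=\delta_\gamma$, $\rho_{\mathcal{T}}(\mathcal{V}_\gamma)=\dot\partial_\gamma$, Proposition \ref{brackets} gives
\[
\rho_{\mathcal{T}}\big([\mathcal{X}_\alpha,\mathcal{X}_\beta]_{\mathcal{T}}\big)=\mathcal{C}^{\:\gamma}_{\alpha\beta}\,\delta_\gamma+\mathcal{R}^{\:\gamma}_{\alpha\beta}\,\dot\partial_\gamma,\qquad \mathcal{R}^{\:\gamma}_{\alpha\beta}=\mathcal{C}^{\:\varepsilon}_{\alpha\beta}N_\varepsilon^\gamma-\delta_\alpha N^\gamma_\beta+\delta_\beta N^\gamma_\alpha .
\]
So the whole matter reduces to one computation about the Chern-Finsler coefficients $N_\alpha^\gamma=h^{\bar\sigma\gamma}\partial_\alpha\dot\partial_{\bar\sigma}F$ of \eqref{cncp.C-F}: that the vertical obstruction $\mathcal{R}^{\:\gamma}_{\alpha\beta}$ vanishes identically, equivalently $\delta_\beta N^\gamma_\alpha-\delta_\alpha N^\gamma_\beta=-\mathcal{C}^{\:\varepsilon}_{\alpha\beta}N_\varepsilon^\gamma$. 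Granting this, $[\mathcal{X}_\alpha,\mathcal{X}_\beta]_{\mathcal{T}}=\mathcal{C}^{\:\gamma}_{\alpha\beta}\mathcal{X}_\gamma$ and one concludes \eqref{cr.C-F} upon applying $\rho_{\mathcal{T}}$.

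For the key identity I would pass to the $M$-indexed coefficients $N_k^\gamma=h^{\bar\sigma\gamma}\partial_k\dot\partial_{\bar\sigma}F$, using $\delta_\alpha=\rho_\alpha^k\delta_k$, $N_\alpha^\gamma=\rho_\alpha^kN_k^\gamma$ and $\delta_k\rho_\alpha^h=\partial_k\rho_\alpha^h$ (the anchor coefficients depend on $z$ only). Then $\delta_\beta N^\gamma_\alpha-\delta_\alpha N^\gamma_\beta$ splits into the term $\rho_\alpha^k\rho_\beta^h\big(\delta_hN_k^\gamma-\delta_kN_h^\gamma\big)$ and the term $\big(\rho_\beta^h\partial_h\rho_\alpha^k-\rho_\alpha^h\partial_h\rho_\beta^k\big)N_k^\gamma$; the latter equals $-\mathcal{C}^{\:\varepsilon}_{\alpha\beta}N_\varepsilon^\gamma$ because $\rho_E$ is a homomorphism of Lie algebras, i.e. $\rho_\alpha^h\partial_h\rho_\beta^k-\rho_\beta^h\partial_h\rho_\alpha^k=\mathcal{C}^{\:\varepsilon}_{\alpha\beta}\rho_\varepsilon^k$ (the consequence of \eqref{1} recalled right after it). It remains to prove the ``classical'' symmetry $\delta_hN_k^\gamma=\delta_kN_h^\gamma$: one substitutes $\partial_k\dot\partial_{\bar\sigma}F=(\partial_kh_{\mu\bar\sigma})u^\mu$ (which follows from $h_{\mu\bar\sigma}u^\mu=\dot\partial_{\bar\sigma}F$, a homogeneity consequence), uses $\partial_kh^{\bar\sigma\gamma}=-h^{\bar\varepsilon\gamma}h^{\bar\sigma\tau}\partial_kh_{\tau\bar\varepsilon}$ and the definition $\delta_k=\partial_k-N_k^\mu\dot\partial_\mu$, and checks that the resulting terms cancel in pairs: the $\partial_kh_{\tau\bar\varepsilon}$-type terms cancel after relabelling, the term involving the second $z$-derivative of $h$ drops out since $\partial_k\partial_h h_{\mu\bar\sigma}=\partial_h\partial_k h_{\mu\bar\sigma}$, and the $N$-quadratic term drops out since $\dot\partial_\nu h_{\mu\bar\sigma}=\dot\partial_\mu h_{\nu\bar\sigma}$ (immediate from \eqref{h}). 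Combining, $\mathcal{R}^{\:\gamma}_{\alpha\beta}=\mathcal{C}^{\:\varepsilon}_{\alpha\beta}N_\varepsilon^\gamma-\mathcal{C}^{\:\varepsilon}_{\alpha\beta}N_\varepsilon^\gamma=0$.

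The main obstacle is the bookkeeping in $\delta_hN_k^\gamma=\delta_kN_h^\gamma$: one must carefully separate the terms carrying $z$-derivatives of $h$ from those carrying $u$-derivatives, and invoke at precisely the right moment both the commutativity of $\partial/\partial z^k$ and $\partial/\partial z^h$ and the total symmetry of the third $\dot\partial$-derivatives of $F$. The structure functions $\mathcal{C}^{\:\gamma}_{\alpha\beta}$, which have no analogue in Aikou's vector-bundle setting, enter only through $[\partial_\alpha,\partial_\beta]=\mathcal{C}^{\:\gamma}_{\alpha\beta}\partial_\gamma$ and are absorbed exactly by the $\mathcal{C}^{\:\varepsilon}_{\alpha\beta}N_\varepsilon^\gamma$ term already present in $\mathcal{R}^{\:\gamma}_{\alpha\beta}$, which is why the algebroid statement takes the same form \eqref{cr.C-F} as the classical one. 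A computationally equivalent shortcut, in the spirit of the Remark following the Chern-Finsler theorem, is to verify $\mathcal{R}^{\:\gamma}_{\alpha\beta}=0$ directly from \eqref{C-F} using \eqref{L,N} and the symmetry \eqref{C} of $C^{\:\gamma}_{\alpha\beta}$, which merely repackages the same cancellations through the coefficients $L^{\:\gamma}_{\alpha\beta}$.
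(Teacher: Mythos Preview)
Your derivation of $\mathcal{R}^{\:\gamma}_{\alpha\beta}=0$ is correct and cleanly organized: the split through the base-indexed coefficients $N_k^\gamma$ together with the anchor identity $\rho_\alpha^h\partial_h\rho_\beta^k-\rho_\beta^h\partial_h\rho_\alpha^k=\mathcal{C}^{\:\varepsilon}_{\alpha\beta}\rho_\varepsilon^k$ is exactly how the structure functions enter and are absorbed. But the final inference does not close. From $[\mathcal{X}_\alpha,\mathcal{X}_\beta]_{\mathcal{T}}=\mathcal{C}^{\:\gamma}_{\alpha\beta}\mathcal{X}_\gamma$ you \emph{cannot} conclude \eqref{cr.C-F}: applying $\rho_{\mathcal{T}}$ gives $\mathcal{C}^{\:\gamma}_{\alpha\beta}\delta_\gamma$, and $\delta_\gamma=\rho_\gamma^k\partial_k-N_\gamma^\sigma\dot\partial_\sigma$ is not the zero vector field on $E$. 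Indeed your own first display already exhibits the obstruction $\mathcal{C}^{\:\gamma}_{\alpha\beta}\delta_\gamma$, and nothing in the subsequent argument touches it. What your computation actually establishes is $[\delta_\alpha,\delta_\beta]=\mathcal{C}^{\:\gamma}_{\alpha\beta}\delta_\gamma$ (involutivity of the horizontal distribution); the identity \eqref{cr.C-F} as stated would further require $\mathcal{C}^{\:\gamma}_{\alpha\beta}\rho_\gamma^k=0$, which is not a consequence of the Chern--Finsler construction.

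For comparison, the paper does not pass through Proposition~\ref{brackets} but computes $[\delta_\alpha,\delta_\beta]$ directly, writing its $\dot\partial_\sigma$-coefficient as $\delta_\beta N^\sigma_\alpha-\delta_\alpha N^\sigma_\beta$ and arguing it vanishes. In going from $\partial_\beta N^\sigma_\alpha-\partial_\alpha N^\sigma_\beta$ to the next line the paper tacitly cancels $h^{\bar\delta\sigma}\partial_\beta\partial_\alpha\dot\partial_{\bar\delta}F$ against $h^{\bar\delta\sigma}\partial_\alpha\partial_\beta\dot\partial_{\bar\delta}F$; since $[\partial_\alpha,\partial_\beta]=\mathcal{C}^{\:\varepsilon}_{\alpha\beta}\partial_\varepsilon$, the residue is precisely the $-\mathcal{C}^{\:\varepsilon}_{\alpha\beta}N_\varepsilon^\sigma$ term your computation retains. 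So your intermediate result $\delta_\beta N^\sigma_\alpha-\delta_\alpha N^\sigma_\beta=-\mathcal{C}^{\:\varepsilon}_{\alpha\beta}N_\varepsilon^\sigma$ is the correct one, and both arguments leave the horizontal part $\mathcal{C}^{\:\gamma}_{\alpha\beta}\delta_\gamma$ of $[\delta_\alpha,\delta_\beta]$ unaccounted for.
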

\begin{proof}
We have $\rho_{\mathcal{T}}\big([\mathcal{X}_\alpha,\mathcal{X}_\beta]\big) = [\rho_{\mathcal{T}}(\mathcal{X}_\alpha),\rho_{\mathcal{T}}(\mathcal{X}_\beta)] = [\delta_\alpha,\delta_\beta]$. Then, using $h^{\bar{\varepsilon}\gamma}h_{\tau\bar{\varepsilon}} = \delta^\gamma_\tau$, we get $\delta_\alpha(h^{\bar{\mu}\gamma}) = -h^{\bar{\mu}\tau}h^{\bar{\varepsilon}\gamma}\delta_\alpha(h_{\tau\bar{\varepsilon}})$. We can now compute the coefficient of index $\sigma$ of the Lie bracket $[\delta_\alpha,\delta_\beta]$:
\begin{align*}
[\delta_\alpha,\delta_\beta]^\sigma 
&= \delta_\beta(N^\sigma_\alpha) - \delta_\alpha(N^\sigma_\beta)\\
&= \partial_\beta(N^\sigma_\alpha) - \partial_\alpha(N^\sigma_\beta) + N^\delta_\alpha\dot{\partial}_\delta(N^\sigma_\beta) - N^\nu_\beta\dot{\partial}_\nu(N^\sigma_\alpha)\\
&= \partial_\beta(h^{\bar{\delta}\sigma})\partial_\alpha\dot{\partial}_{\bar{\delta}}F - \partial_\alpha(h^{\bar{\mu}\sigma})\partial_\beta\dot{\partial}_{\bar{\mu}}F\\
&\quad + h^{\bar{\theta}\delta}\partial_\alpha\dot{\partial}_{\bar{\theta}}F\left[\dot{\partial}_\delta(h^{\bar{\mu}\sigma})\partial_\beta\dot{\partial}_{\bar{\mu}}F + h^{\bar{\mu}\sigma}\partial_\beta(h_{\delta\bar{\mu}})\right]\\
&\quad -h^{\bar{\theta}\nu}\partial_\beta\dot{\partial}_{\bar{\theta}}F\left[\dot{\partial}_\nu(h^{\bar{\delta}\sigma})\partial_\alpha\dot{\partial}_{\bar{\delta}}F + h^{\bar{\delta}\sigma}\partial_\alpha(h_{\nu\bar{\delta}})\right]\\
&= h^{\bar{\theta}\delta}\dot{\partial}_\delta(h^{\bar{\mu}\sigma})\partial_\alpha\dot{\partial}_{\bar{\theta}}F\;\partial_\beta\dot{\partial}_{\bar{\mu}}F - h^{\bar{\theta}\nu}\dot{\partial}_\nu(h^{\bar{\gamma}\sigma})\partial_\beta\dot{\partial}_{\bar{\theta}}F\;\partial_\alpha\dot{\partial}_{\bar{\gamma}}F\\
&= N^\delta_\alpha N^\gamma_\beta h_{\gamma\bar{\mu}}\dot{\partial}_\delta(h^{\bar{\mu}\sigma}) - N^\nu_\beta N^\gamma_\alpha h_{\gamma\bar{\delta}}\dot{\partial}_\nu(h^{\bar{\delta}\sigma})\\
&= N^\delta_\alpha N^\gamma_\beta \left[ h_{\gamma\bar{\mu}}\dot{\partial}_\delta(h^{\bar{\mu}\sigma}) - h_{\delta\bar{\mu}}\dot{\partial}_\gamma(h^{\bar{\mu}\sigma}) \right]\\
&= N^\delta_\alpha N^\gamma_\beta \left[ h_{\delta\bar{\mu}}h^{\bar{\mu}\tau}h^{\bar{\varepsilon}\sigma}\dot{\partial}_\gamma(h_{\tau\bar{\varepsilon}}) - h_{\gamma\bar{\mu}}h^{\bar{\mu}\tau}h^{\bar{\varepsilon}\sigma}\dot{\partial}_\delta(h_{\tau\bar{\varepsilon}}) \right]\\
&= N^\delta_\alpha N^\gamma_\beta h^{\bar{\varepsilon}\sigma} \left[ \dot{\partial}_\gamma(h_{\delta\bar{\varepsilon}}) - \dot{\partial}_\delta(h^{\gamma\bar{\varepsilon}}) \right]\\
&= 0.
\end{align*}
\end{proof}

Using \eqref{L,N} and \eqref{C}, the components of the torsion of the Chern-Finsler connection on $\mathcal{T}E$ become:
\begin{align*}
\mathcal{T}(\mathcal{X}_\alpha,\mathcal{X}_\beta) &= \big(L^{\:\gamma}_{\beta\alpha}-L^{\:\gamma}_{\alpha\beta}-\mathcal{C}^{\:\gamma}_{\alpha\beta}\big)\mathcal{X}_\gamma - \mathcal{R}^{\:\gamma}_{\alpha\beta}\mathcal{V}_\gamma,\\
\mathcal{T}(\mathcal{X}_\alpha,\mathcal{X}_{\bar{\beta}}) &= -(\delta_{\bar{\beta}}N^\gamma_\alpha)\mathcal{V}_\gamma + (\delta_\alpha N^{\bar{\gamma}}_{\bar{\beta}})\mathcal{V}_{\bar{\gamma}},\\
\mathcal{T}(\mathcal{X}_\alpha,\mathcal{V}_\beta) &= -C^{\:\gamma}_{\alpha\beta}\mathcal{X}_\gamma,\\
\mathcal{T}(\mathcal{X}_\alpha,\mathcal{V}_{\bar{\beta}}) &= -(\dot{\partial}_{\bar{\beta}}N^\gamma_\alpha)\mathcal{V}_\gamma,\\
\mathcal{T}(\mathcal{V}_\alpha,\mathcal{V}_\beta) &= 0,\\
\mathcal{T}(\mathcal{V}_\alpha,\mathcal{V}_{\bar{\beta}}) &= 0.
\end{align*}

The nonzero components of the curvature of the Chern-Finsler connection are:
\begin{align*}
\mathcal{R}(\mathcal{X}_\alpha,\mathcal{X}_\beta)\mathcal{X}_\gamma &= \big[\delta_\alpha L^{\:\tau}_{\gamma\beta} - \delta_\beta L^{\:\tau}_{\gamma\alpha} + L^{\:\sigma}_{\gamma\beta}L^{\:\tau}_{\sigma\alpha} - L^{\:\sigma}_{\gamma\alpha}L^{\:\tau}_{\sigma\beta} - \mathcal{C}^{\:\sigma}_{\alpha\beta}L^{\:\tau}_{\gamma\sigma} - \mathcal{R}^{\:\sigma}_{\alpha\beta}C^{\:\tau}_{\gamma\sigma}\big]\mathcal{X}_\tau,\\
\mathcal{R}(\mathcal{X}_\alpha,\mathcal{X}_{\bar{\beta}})\mathcal{X}_\gamma &= \big[-\delta_{\bar{\beta}}L^{\:\tau}_{\gamma\alpha} - \mathcal{R}^{\:\sigma}_{\alpha\bar{\beta}}C^{\:\tau}_{\gamma\sigma}\big]\mathcal{X}_\tau,\\
\mathcal{R}(\mathcal{X}_{\bar{\alpha}},\mathcal{X}_\beta)\mathcal{X}_\gamma &= \big[\delta_{\bar{\alpha}}L^{\:\tau}_{\gamma\beta} - \mathcal{R}^{\:\sigma}_{\bar{\alpha}\beta}C^{\:\tau}_{\gamma\sigma}\big]\mathcal{X}_\tau,\\
\mathcal{R}(\mathcal{X}_\alpha,\mathcal{X}_\beta)\mathcal{V}_\gamma &= \big[\delta_\alpha L^{\:\tau}_{\gamma\beta} - \delta_\beta L^{\:\tau}_{\gamma\alpha} + L^{\:\sigma}_{\gamma\beta}L^{\:\tau}_{\sigma\alpha} - L^{\:\sigma}_{\gamma\alpha}L^{\:\tau}_{\sigma\beta} - \mathcal{C}^{\:\sigma}_{\alpha\beta}L^{\:\tau}_{\gamma\sigma} - \mathcal{R}^{\:\sigma}_{\alpha\beta}C^{\:\tau}_{\gamma\sigma}\big]\mathcal{V}_\tau,\\
\mathcal{R}(\mathcal{X}_\alpha,\mathcal{X}_{\bar{\beta}})\mathcal{V}_\gamma &= \big[-\delta_{\bar{\beta}}L^{\:\tau}_{\gamma\alpha} - \mathcal{R}^{\:\sigma}_{\alpha\bar{\beta}}C^{\:\tau}_{\gamma\sigma}\big]\mathcal{V}_\tau,\\
\mathcal{R}(\mathcal{X}_{\bar{\alpha}},\mathcal{X}_\beta)\mathcal{V}_\gamma &= \big[\delta_{\bar{\alpha}}L^{\:\tau}_{\gamma\beta} - \mathcal{R}^{\:\sigma}_{\bar{\alpha}\beta}C^{\:\tau}_{\gamma\sigma}\big]\mathcal{V}_\tau,\\
\mathcal{R}(\mathcal{X}_\alpha,\mathcal{V}_\beta)\mathcal{V}_\gamma &= \big[\delta_\alpha C^{\:\tau}_{\gamma\beta} - \dot{\partial}_\beta L^{\:\tau}_{\gamma\alpha} + C^{\:\sigma}_{\gamma\beta}L^{\:\tau}_{\sigma\alpha} - L^{\:\sigma}_{\gamma\alpha}C^{\:\tau}_{\sigma\beta} - L^{\:\sigma}_{\beta\alpha}C^{\:\tau}_{\gamma\sigma}\big]\mathcal{V}_\tau,\\
\mathcal{R}(\mathcal{X}_\alpha,\mathcal{V}_{\bar{\beta}})\mathcal{V}_\gamma &= \big[-\dot{\partial}_{\bar{\beta}}L^{\:\tau}_{\gamma\alpha} - (\dot{\partial}_{\bar{\beta}}N^\sigma_\alpha)C^{\:\tau}_{\gamma\sigma}\big]\mathcal{V}_\tau,\\
\mathcal{R}(\mathcal{X}_{\bar{\alpha}},\mathcal{V}_\beta)\mathcal{V}_\gamma &= (\delta_{\bar{\alpha}}C^{\:\tau}_{\gamma\beta})\mathcal{V}_\tau,\\
\mathcal{R}(\mathcal{V}_\alpha,\mathcal{V}_\beta)\mathcal{X}_\gamma &= \big[\dot{\partial}_\alpha C^{\:\tau}_{\gamma\beta} - \dot{\partial}_\beta C^{\:\tau}_{\gamma\alpha} + C^{\:\sigma}_{\gamma\beta}C^{\:\tau}_{\sigma\alpha} - C^{\:\sigma}_{\gamma\alpha}C^{\:\tau}_{\sigma\beta}\big]\mathcal{X}_\tau,\\
\mathcal{R}(\mathcal{V}_\alpha,\mathcal{V}_{\bar{\beta}})\mathcal{X}_\gamma &= (-\dot{\partial}_{\bar{\beta}}C^{\:\tau}_{\gamma\alpha})\mathcal{X}_\tau,\\
\mathcal{R}(\mathcal{V}_{\bar{\alpha}},\mathcal{V}_\beta)\mathcal{X}_\gamma &= (\dot{\partial}_{\bar{\alpha}}C^{\:\tau}_{\gamma\beta})\mathcal{X}_\tau,\\
\mathcal{R}(\mathcal{V}_\alpha,\mathcal{V}_\beta)\mathcal{V}_\gamma &= \big[\dot{\partial}_\alpha C^{\:\tau}_{\gamma\beta} - \dot{\partial}_\beta C^{\:\tau}_{\gamma\alpha} + C^{\:\sigma}_{\gamma\beta}C^{\:\tau}_{\sigma\alpha} - C^{\:\sigma}_{\gamma\alpha}C^{\:\tau}_{\sigma\beta}\big]\mathcal{V}_\tau,\\
\mathcal{R}(\mathcal{V}_\alpha,\mathcal{V}_{\bar{\beta}})\mathcal{V}_\gamma &= (-\dot{\partial}_{\bar{\beta}}C^{\:\tau}_{\gamma\alpha}))\mathcal{V}_\tau,\\
\mathcal{R}(\mathcal{V}_{\bar{\alpha}},\mathcal{V}_\beta)\mathcal{V}_\gamma &= (\dot{\partial}_{\bar{\alpha}}C^{\:\tau}_{\gamma\beta})\mathcal{V}_\tau.
\end{align*}

We shall now analyze the dual of the adapted frame $\{\mathcal{X}_\alpha,\mathcal{V}_\alpha\}$ on $\mathcal{T}'E$. Denote the dual frame by $\{\mathcal{Z}^\alpha,\delta\mathcal{V}^\alpha\}$, where $\{\mathcal{Z}^\alpha,\mathcal{V}^\alpha\}$ is the dual frame of $\{\mathcal{Z}_\alpha,\mathcal{V}_\alpha\}$ and $\delta\mathcal{V}^\alpha = \mathcal{V}^\alpha+N^\alpha_\beta\mathcal{Z}^\beta$. Since the dual frame $\{\mathcal{Z}^\alpha,\mathcal{V}^\alpha\}$ changes by the rules
\begin{align*}
\widetilde{\mathcal{Z}}^\alpha &= M^\alpha_\beta \mathcal{Z}^\beta,\\
\widetilde{\mathcal{V}}^\alpha &= M^\alpha_\beta \mathcal{V}^\beta + \rho^k_\beta\dfrac{\partial M^\alpha_\gamma}{\partial z^k}u^\gamma\mathcal{Z}^\beta,
\end{align*}
we obtain
\begin{equation*}
\widetilde{\delta\mathcal{V}}^\alpha = M^\alpha_\beta \delta\mathcal{V}^\beta.
\end{equation*}

The differential of a function $f$ on the complexified prolongation $\mathcal{T}_{\mathbb{C}}E$ is locally expressible as
\begin{equation*}
df = (\delta_\alpha f)\mathcal{Z}^\alpha + (\dot{\partial}_\alpha f)\delta\mathcal{V}^\alpha + (\delta_{\bar{\alpha}}f)\mathcal{Z}^{\bar{\alpha}} + (\dot{\partial}_{\bar{\alpha}}f)\delta\mathcal{V}^{\bar{\alpha}}.
\end{equation*}
With respect to the \eqref{split} decomposition of the prolongation, the differential can be written as
\begin{equation*}
df = \partial^hf + \partial^vf + \bar{\partial}^hf + \bar{\partial}^vf,
\end{equation*}
where
\begin{align*}
\partial^hf &= (\delta_\alpha f)\mathcal{Z}^\alpha = \bigg(\rho^k_\alpha\dfrac{\partial f}{\partial z^k} - N^\beta_\alpha\dfrac{\partial f}{\partial u^\beta}\bigg)\mathcal{Z}^\alpha,\quad
\partial^vf &= (\dot{\partial}_\alpha f)\delta\mathcal{V}^\alpha = \dfrac{\partial f}{\partial u^\alpha}\delta\mathcal{V}^\alpha,\\
\bar{\partial}^hf &= (\delta_{\bar{\alpha}}f)\bar{\mathcal{Z}}^\alpha = \bigg(\rho^{\bar{k}}_{\bar{\alpha}}\dfrac{\partial f}{\partial\bar{z}^k} - N^{\bar{\beta}}_{\bar{\alpha}}\dfrac{\partial f}{\partial\bar{u}^\beta}\bigg)\bar{\mathcal{Z}}^\alpha,\quad
\bar{\partial}^vf &= (\dot{\partial}_{\bar{\alpha}}f)\delta\bar{\mathcal{V}}^\alpha = \dfrac{\partial f}{\partial\bar{u}^\alpha}\delta\bar{\mathcal{V}}^\alpha.
\end{align*}
In particular,
\begin{equation*}
d\mathcal{Z}^\alpha = -\dfrac{1}{2}C^{\:\alpha}_{\beta\gamma}\mathcal{Z}^\beta\wedge\mathcal{Z}^\gamma  - \dfrac{1}{2}C^{\:\alpha}_{\bar{\beta}\gamma}\bar{\mathcal{Z}}^\beta\wedge\mathcal{Z}^\gamma,\qquad d\mathcal{V}^\alpha = 0.
\end{equation*}

This formalism introduced here will be further used in a study of Laplace type operators on the holomorphic Lie algebroid $E$.

\subsection{K\"ahler Finsler algebroids}

In order to define the K\"ahler condition in the case of a Lie algebroid, we first need to define a metric structure on the prolongation $\mathcal{T}_{\mathbb{C}}E$  by
\begin{equation}
\label{metric}
\mathcal{G} = h_{\alpha\bar{\beta}}\mathcal{Z}^\alpha\otimes\bar{\mathcal{Z}}^\beta + h_{\alpha\bar{\beta}}\delta\mathcal{V}^\alpha\otimes\delta\bar{\mathcal{V}}^\beta.
\end{equation}

We now check if the Chern-Finsler connection \eqref{C-F} is metric with respect to the structure $\mathcal{G}$ defined above. First, note that, due to the fact that the components of the metric structure \eqref{metric} on $\mathcal{T}E$ depend only on $(z^k,u^\alpha)$, then the action of the vector fields $\{\mathcal{X}_\alpha,\mathcal{X}_{\bar{\alpha}},\mathcal{V}_\alpha,\mathcal{V}_{\bar{\alpha}}\}$ on the components of $\mathcal{G}$ is
\begin{align*}
&\mathcal{X}_\alpha h_{\beta\bar{\gamma}} = \delta_\alpha h_{\beta\bar{\gamma}},\quad
\mathcal{X}_{\bar{\alpha}} h_{\beta\bar{\gamma}} = \delta_{\bar{\alpha}} h_{\beta\bar{\gamma}},\\
&\mathcal{V}_\alpha h_{\beta\bar{\gamma}} = \dot{\partial}_\alpha h_{\beta\bar{\gamma}},\quad
\mathcal{V}_{\bar{\alpha}} h_{\beta\bar{\gamma}} = \dot{\partial}_\alpha h_{\beta\bar{\gamma}},
\end{align*}
such that the identity
\begin{equation*}
X\mathcal{G}(Y,Z) = \mathcal{G}(\mathcal{D}_XY,Z) + \mathcal{G}(Y,\mathcal{D}_XZ)
\end{equation*}
is easily checked as in the classical case, for $Y=\mathcal{V}_\beta,\ Z=\mathcal{V}_{\bar{\gamma}}$ and $X=\mathcal{X}_\alpha$ or $X=\mathcal{X}_{\bar{\alpha}}$ (\cite{15}). This means that the Chern-Finsler connection on the prolongation is \textit{metric} with respect to the structure $\mathcal{G}$. 

Let us now consider the horizontal $2$-form
\begin{equation} \label{f.K}
\Theta^h = -i h_{\alpha\bar{\beta}}\mathcal{Z}^\alpha\wedge\bar{\mathcal{Z}}^{\beta}.
\end{equation}
We can define a K\"ahler-Finsler algebroid following the idea of Aikou, \cite{2}.

\begin{defn}
A holomorphic Lie algebroid is called K\"ahler-Finsler algebroid if the horizontal K\"ahler form \eqref{f.K} is $h$-closed, i.e., $d^h\Theta^h=0$.
\end{defn}

The condition from the definition above easily yields
\begin{equation*}
\delta_\gamma h_{\alpha\bar{\beta}} = \delta_\alpha h_{\gamma\bar{\beta}},\quad \delta_{\bar{\gamma}} h_{\alpha\bar{\beta}} = \delta_{\bar{\beta}} h_{\alpha\bar{\gamma}},
\end{equation*}
which, due to \eqref{L,C}, becomes
\begin{equation}
\label{cond.K}
L^{\:\sigma}_{\alpha\gamma} = L^{\:\sigma}_{\gamma\alpha},
\end{equation}
a similar condition to the one from the classical case.


\begin{thebibliography}{9}
\bibitem{1} Abate, M., Patrizio, G., Finsler Metrics - A Global Approach with Applications to Geometric Function Theory, Springer-Verlag, 1591, 1994.
\bibitem{2} Aikou, T., On Complex Finsler Manifolds, Rep. Fac. Sci. Kagoshima Univ., 35, p. 9-25, 1991. 
\bibitem{3} Aikou, T., Finsler geometry on complex vector bundles, Riemann Finsler Geometry, MSRI Publications, 50, p. 85-107, 2004.
\bibitem{4} Aikou, T., A Partial Connection on Complex Finsler Bundles and its Applications, Illinois J. of Math., 42, p. 481-492, 1998.
\bibitem{5} Aikou, T., Applications of Bott Connection to Finsler Geometry, Steps in Diff. Geom., Proc. of Coll. on Diff. Geom., Debrecen, p. 3-13, 2000.
\bibitem{6} Bland, J., Kalka, M., Variations of holomorphic curvature for K\"ahler Finsler metrics, Cont. Math, 1996, 196, p. 121--132. 
\bibitem{7} Ida, C., Popescu, P., On Almost Complex Lie Algebroids, Mediterr. J. Math., Vol. 13, Iss. 2, p. 803-824, 2016.
\bibitem{8} Ionescu, A., On holomorphic Lie algebroids, Bulletin of Transilvania Univ., Series III, Vol 9(58), No. 1, p. 53--66, 2016.
\bibitem{9} Ionescu, A., Munteanu, G., Connections in holomorphic Lie algebroids, manuscript available at arXiv:1605.08203
\bibitem{10} Laurent-Gengoux, C., Sti\' enon, M., Xu, P., Holomorphic Poisson manifolds and holomorphic Lie algebroids, Int. Math. Res. Not IMRN rnn88, 46, 2008.
\bibitem{11} Liberman, P., Lie algebroids and Mechanics, Arch. Math. (Brno) 32, p. 147--162, 1996.
\bibitem{12} Marle, C.-M., Calculus on Lie algebroids, Lie groupoids and Poisson manifolds, Dissertationes Mathematicae, Inst. Math., Polish Acad. Sci. 457, 57, 2008.
\bibitem{13} Martinez, E., Lagrangian mechanics on Lie algebroids, Acta Applicandae Mathematicae, 67, p. 295-320, 2001.
\bibitem{14} Martinez, E., Geometric formulation of mechanics on Lie algebroids, Proc. of the VIIIth Workshop on Geometry and Physics (Medina del Campo, 1999), vol. 2 of Publ. R. Soc. Mat. Esp., p. 209-222, 2001.
\bibitem{15} Munteanu, G., Complex spaces in Finsler, Lagrange and Hamilton geometries, Kluwer Acad. Publishers, Dordrecht, 2004.
\bibitem{16} Peyghan, E., Models of Finsler Geometry on Lie algebroids, arXiv:1310.7393v1, 2013.
\bibitem{17} Popescu, L., On the geometry of Lie algebroids and applications to optimal control, An. \c St. ale Univ. "Al. I. Cuza", Ia\c si, Vol. LI, s. I, 2005.
\bibitem{18} Popescu, L., Geometrical structures on Lie algebroids, Publ. Math. Debrecen 72 (1-2), p. 95-109, 2008.
\bibitem{19} Weinstein, A., Lagrangian mechanics and grupoids, Fields Inst. Comm., 7, p. 206-231, 1996.
\bibitem{20} Weinstein, A., The integration problem for complex Lie algebroids, arXiv:math/0601752, 2006.
\end{thebibliography}
\end{document}